\documentclass{article}
\usepackage{graphicx} 
\usepackage{color} 
\usepackage{amsthm} 
\usepackage{amsmath}
\usepackage{amsfonts}
\usepackage{mathabx} 
\usepackage{xcolor}
\usepackage[normalem]{ulem}
\usepackage{comment}
\usepackage{thmtools, thm-restate} 
\usepackage[utf8]{inputenc}
\usepackage{graphicx}
\usepackage{subcaption}
\usepackage{lineno}
\usepackage{diagbox} 
\usepackage{multirow}

\newtheorem{theorem}{Theorem}
\newtheorem{conj}[theorem]{Conjecture}

\newtheorem{problem}{Problem}

\newtheorem{corollary}[theorem]{Corollary}

\newtheorem{lemma}[theorem]{Lemma}

\providecommand{\keywords}[1]{
  \par
  \addvspace\baselineskip 
  \noindent\textbf{\textit{Keywords---}} #1
}

\title{The balanced upper chromatic number of linear hypergraphs and the $n$-cube over $t$ elements
\thanks{Part of this research was performed during a Simons Laufer Mathematical Sciences Institute
(SLMath, formerly MSRI) summer research program, which is supported by the National Science Foundation (Grant No. DMS-1928930) and in partnership with the Mathematics Institute of the National Autonomous University of Mexico (UNAM).}
}

\author{Gabriela Araujo-Pardo\thanks{Instituto de Matem\'aticas, Universidad Nacional Aut\'onoma de M\'exico, Campus Juriquilla, Mexico, {\tt garaujo@im.unam.mx}, supported by DGAPA PAPIIT IN113324, SECHITI: CBF2023-2024-552}
	\and
	Silvia Fern\'andez-Merchant\thanks{California State University, Northridge, 18311 Nordhoff St, Northridge, CA, 91330, USA. {\tt silvia.fernandez@csun.edu}, supported by a 2024 RSP CSUN Campus Funding Initiative}
    \and
    Adriana Hansberg\thanks{Instituto de Matem\'aticas, Universidad Nacional Aut\'onoma de M\'exico, Campus Juriquilla, Mexico, {\tt ahansberg@im.unam.mx}, supported by DGAPA PAPIIT IG100822}
    \and 
    Dolores Lara\thanks{Departamento de Computaci\'on, Centro de Investigaci\'on y Estudios de Posgrado del IPN, Mexico City, Mexico, {\tt dolores.lara@cinvestav.mx}}
    \and 
    Amanda Montejano\thanks{UMDI, Facultad de Ciencias, UNAM Juriquilla, Quer\'etaro, Mexico, {\tt amandamontejano@ciencias.unam.mx}, supported by DGAPA PAPIIT IG100822 y IN113626}
    \and 
    D\'eborah Oliveros\thanks{Instituto de Matem\'aticas, Universidad Nacional Aut\'onoma de M\'exico, Campus Juriquilla, Mexico, {\tt doliveros@im.unam.mx}, supported by DGAPA PAPIIT -IN112124, IN109826 SECHITI: CBF2023-2024-552}
 }

\begin{document}
\maketitle

\begin{abstract}
A coloring of the vertices of a hypergraph is called \emph{balanced} if the sizes of the color classes differ by at most one. We say that a hyperedge is \emph{rainbow} if its elements have pairwise distinct colors. In this paper, we provide a general upper bound on the \emph{balanced upper chromatic number} of arbitrary linear hypergraphs, that is, the largest integer $k$ such that there exists a balanced $k$-coloring of the vertices of the hypergraph without rainbow hyperedges. We focus on the cube $C_t^n$, defined as the linear hypergraph whose vertices are the lattice points in $[0,t-1]^n$, and whose hyperedges are the sets of $t$ collinear points. 
We determine the exact balanced upper chromatic number of $C_t^n$ for $t\geq 4n-2$. For smaller values of $t$, we present bounds and determine this parameter (with few exceptions) in dimensions $2$ and $3$.
\end{abstract}
\keywords{Balanced upper chromatic number, Linear hypergraph, Hypercube}

\section{Introduction}
Let $H$ be a $t$-uniform hypergraph of order $v$. $H$ is called \emph{linear} if its edges intersect pairwise in at most one vertex. Given a coloring of the vertices of $H$, we say that a hyperedge is \emph{rainbow} if its elements have pairwise distinct colors. A coloring is called \emph{balanced} if the sizes of the color classes differ by at most one. The \emph{balanced upper chromatic number} of $H$, denoted by $\overline{\chi}_b(H)$, is the largest integer $k$ such that  there is a balanced $k$-coloring of the vertices of $H$ without rainbow hyperedges. The \emph{$n$-cube on $t$ elements}, denoted by $C_t^n$, is defined as the set of lattice points with integer coordinates in the interval $[0,t-1]$. The \emph{geometric lines} of this cube are all the subsets of $t$ collinear points. They satisfy that, for each $0\leq i\leq t-1$, the coordinates are all equal to some fixed value or equal to the vector $(0,1, \ldots, t-1)$ either appearing  in increasing or decreasing order. We will regard $C_t^n$ as a hypergraph by taking the geometric lines as its hyperedges, see Section ~\ref{sec:n-cube_definition} for precise definitions and notation around the concept of the $n$-cube $C_t^n$. In this paper, we  study the balanced upper chromatic number of linear hypergraphs and, more specifically, of the $n$-cube on $t$ elements.

The study of the balanced upper chromatic number comes back to the setting of rainbow Ramsey problems. Broadly speaking, Ramsey theory usually studies the presence of monochromatic patterns in colorings of mathematical universes. Typically, one is interested in the smallest possible size of a set of elements such that, under any arbitrary coloring, it inevitably contains certain special monochromatic subsets. For example, the classical Ramsey theorem in Graph theory states that every $k$-coloring of the set of edges of a complete graph $K_n$ contains a monochromatic copy of $K_t$, provided that $n$ is sufficiently large~\cite{Ram29}. In this context, the classical Ramsey number $R(k;t)$ denotes the minimum integer such that, for any $n \ge R(k;t)$, every $k$-coloring of the edges of $K_n$ contains a monochromatic copy of $K_t$. In the arithmetic setting, the van der Waerden number $w(k;t)$ denotes the minimum integer such that, for every $n \ge w(k;t)$, every $k$-coloring of the set of the first $n$ integers contains a monochromatic $t$-term arithmetic progression \cite{vW27}. An important result that implies van der Waerden's theorem is the well-known Hales-Jewett theorem \cite{bpv09,HJ63} that establishes the existence of monochromatic \emph{combinatorial lines}. The set of combinatorial lines is a subset of the set of geometric lines as defined above, see Section~\ref{sec:n-cube_definition} or \cite{bpv09} for a precise definition. See \cite{GRSS15, S10} for more information on Ramsey Theory.

When looking at color patterns that are different from monochromatic, as for instance rainbow, lexicographic, balanced, or omnitonal patterns (see, for instance, \cite{AxFDF04, CHM21, ErRa50, jlmnr03, hm15}), one is forced to impose certain conditions on the colorings to guarantee an adequate representation of the colors. In the context of rainbow Ramsey theory, which studies the presence of rainbow color patterns, it is thus very usual that density conditions on the color classes are imposed (see \cite{AxFDF04,jlmnr03,hm15}). In this sense, an optimal situation is given when the colors are equally represented, i.e. when the coloring is balanced. Also, as having many colors increases the chance for the presence of rainbow hyperedges, another approach for guaranteeing the existence of the rainbow pattern is to consider more color classes. For instance, 
it is known that, for $n \geq 3$, a $3$-coloring of $[n]$ with at least $\frac{n}{6}$ elements in each color, a rainbow $3$-term arithmetic progression exists \cite{AxFDF04}. This result was originally conjectured in \cite{jlmnr03}, where it is also shown that, for $t \ge 4$ colors, no density condition on the color classes is sufficient to assure the presence of rainbow $t$-term arithmetic progressions. Thus, a natural question in such a situation is to ask for the maximum number of color classes in which rainbow arithmetic progressions (that can be interpreted as hyperedges) could possibly be avoided in balanced colorings, which is the very essence of the balanced upper chromatic number. Precisely for this reason, in \cite{jlmnr03}, the authors propose the study of a parameter whose definition is exactly the same as the upper chromatic number with the particularity that it is restricted to \emph{equinumerous colorings}, that is, colorings where all color classes have exactly the same size (requiring therefore also some divisibility conditions on $n$).

 The balanced upper chromatic number has been studied for cyclic projective planes, projective spaces, desarguesian projective planes and for the cube in \cite{akm, bbmns21, montejano}. In particular, upper bounds on the balanced upper chromatic number of finite projective planes, which constitute a special family of linear hypergraphs, are given in \cite{akm, bbmns21}. For the cube $C_t^n$, the nontrivial lower bound 
 \begin{equation}\label{eq:first_nt_lower}
     \overline{\chi}_b(C_t^n)\geq \left(\frac{t}{2}\right)^n
 \end{equation}
was obtained in \cite{montejano} for any even $t\geq 4$. This bound makes the dependency of $\overline{\chi}_b(C_t^n)$ on the number of points $t^n$ evident, in contrast to arithmetic progressions, where the maximum number of colors such that there is an equinumerous coloring without rainbow $t$-term arithmetic progressions for the set $[n]$ does not depend on $n$ \cite{jlmnr03}. 

Finally, we note here that there is a somehow similar parameter called the upper chromatic number, which 
was introduced in~\cite{voloshin1995upper},
that has been extensively studied for various types of hypergraphs, see \cite{ara03, bhs13, bt08, vol95}. It is defined precisely as the balanced upper chromatic number but with no restriction on the sizes of the color classes. This apparently subtle difference makes a deep impact on the results and techniques, as extremal cases for the parameter without restrictions on the coloring often involve constructions with many small color classes and one very large class of one of the colors, thus going into the opposite direction of balanced colorings. A version of this parameter for the more general case of the so-called mixed hypergraphs is treated in~\cite{montellano2008}.

Other related approaches involving arithmetic structures are handled in~\cite{hm15, act89, dlmor16, fjr07}.

\subsection{The $n$-cube and its geometric lines}\label{sec:n-cube_definition}

We consider the \textit{$n$-cube over $t$ elements}, denoted by $C^n_t$, defined as  the set of  \textit{points} (ordered $n$-tuples) on the set $\{0,1,\dots,t-1\}$. That is, $$C_t^n=\{\textbf{x}=(x_1,x_2,\dots,x_n)\,:\,0\leq x_i\leq t-1, x_i\in\mathbb{Z}\}.$$ 
We use bold fonts to represent the points \textbf{x} of $C_t^n$ as above. A \textit{geometric line} in the $n$-cube $C_t^n$ consists of exactly $t$ collinear points $\textbf{x}_0,\textbf{x}_1,\textbf{x}_2,\dots \textbf{x}_{t-1}$ of $C^n_t$. Formally, a set of $t$ distinct points of $C_t^n$ is a geometric line if there is an order of the points, such that, when we write their coordinates in the following array
    \begin{alignat*}{9}
        &\textbf{x}_0& &\,=\,& &(x_{0,1},& &x_{0,2},& &\dots \quad&  &x_{0,j},& &\dots\quad&  &x_{0,n-1},& &  x_{0,n})\\
        &\textbf{x}_1& &\,=\,& & (x_{1,1},& & x_{1,2},& &\dots&  &x_{1,j},&  &\,\dots\,&  &x_{1,n-1},&&  x_{1,n})\\
        &\textbf{x}_2& &\,=\,& &(x_{2,1},& & x_{2,2},& &\dots&  &x_{2,j},&  &\,\dots\,&  &x_{2,n-1},&&  x_{2,n})\\
       &\:\vdots& && && && && &\:\vdots& && && &\:\vdots\\
       & \textbf{x}_{t-1}& &=& &(x_{t-1,1},& &x_{t-1,2},& &\:\dots& &x_{t-1,j},&  &\:\dots& &x_{t-1,n-1},& &x_{t-1,n}),
    \end{alignat*}
each of the $n$ columns  satisfies one of the conditions below:
    
\begin{enumerate}
    \item[(a)] The entries are all equal to some fixed value $a\in \{0,1,\dots,t-1\}$;
    \item[(b)] The entries appear in increasing order ${0,1,\dots,t-1}$;
    \item[(c)] The entries appear in decreasing order $t, t-1, \ldots 1,0$;
\end{enumerate}
where not all entries satisfy (a), as in this case all points $\textbf{x}_0,\textbf{x}_1,\dots,\textbf{x}_{t-1}$ would concide.
The lines satisfying just conditions (a) or (b) are called \emph{combinatorial lines}. We will identify each line in $C^n_t$  with a vector $\langle l_1,l_2,\dots,l_n \rangle$ such that, for every $1\leq i \leq n$,  
\[
l_i = \left\{
\begin{array}{ll}
     a \in \{0,1, \ldots, t-1\}, &  \mbox{ if column $i$ is of type (a) with fixed value $a$;}\\
     b, &  \mbox{ if column $i$ is of type (b);}\\
     c, & \mbox{ if column $i$ is of type (c),}
\end{array}
\right.
\]

where there is at least one $l_i\in\{b,c\}$. For example, the line identified with vector $\langle 21,b,37,6,b,c,6,34,2,10 \rangle$ corresponds to the following line in $C^{10}_{41}$.
        
\begin{alignat*}{12}\label{example1}
    &\textbf{x}_0&      &\;=& &\;(21,& &\ 0,& &\ 37,&  &\ 6,& &\ 0,& &\;40,& &\ 6,& &\ 34,& &\ 2,& &\ 10) \\
    &\textbf{x}_1&      &\;=& &\;(21,& &\ 1,& &\ 37,&  &\ 6,& &\ 1,& &\;39,& &\ 6,& &\ 34,& &\ 2,& &\ 10) \\
    & \textbf{x}_2&     &\;=& &\;(21,& &\ 2,& &\ 37,&  &\ 6,& &\ 2,& &\;38,& &\ 6,& &\ 34,& &\ 2,& &\ 10) \\
   &\:\vdots& && && && && && &\ \vdots& && && && && &\quad \vdots\\
    &\textbf{x}_{38}&   &\;=&  &\;(21,& &\;38,& &\ 37,& &\ 6,& &\;38,& &\ 2,& &\; 6,& &\ 34,& &\ 2,& &\ 10) \\
    &\textbf{x}_{39}&   &\;=& &\;(21,& &\;39,& &\ 37,& &\ 6,& &\;39,& &\ 1,& &\; 6,& &\ 34,& &\ 2,& &\ 10) \\ 
    &\textbf{x}_{40}&   &\;=& &\;(21,& &\;40,& &\ 37,& &\ 6,& &\;40,& &\ 0,& &\; 6,& &\ 34,& &\ 2,& &\ 10). \\ 
\end{alignat*}

It is important to observe that there must be at least one column of type (b) or (c) so that the $t$ points are different. By convention, to avoid describing the same line in two different ways (as the points of a line can be ordered backwards, too), we always assume that the first column of type (b) or (c) must be of type (b). The set of geometric lines of $C_t^n$ is denoted by $\mathcal{L}(C_t^n)$.  Since each geometric line is contained in a straight line in $\mathbb{R}^n$, two distinct points in $C_t^n$ belong to at most one geometric line, and two different geometric lines intersect in at most one point. The number of geometric lines is known to be 
\begin{equation}\label{eq:lines}
    |\mathcal{L}(C_t^n)|=\frac{(t+2)^n-t^n}{2},
\end{equation}
as shown in \cite{bpv09}. Note that this fact can be easily seen using our description of the geometric lines above. Indeed, there are $(t+2)^n$ vectors $\langle l_1,l_2,\dots,l_n \rangle$ with entries in $\{b,c,0,1,2,\dots,t-1\}$, $t^n$ of them having all $n$ entries in $\{0,1,2,\dots,t-1\}$, which is not allowed; finally we divide by $2$ to avoid repetitions, thus counting just 
those whose first entry in $\{b,c\}$ is $b$.  

In the rest of the paper, we associate the $n$-cube over $t$ elements to the hypergraph whose set of vertices is $C_t^n$ and set of hyperedges is $\mathcal{L}(C_t^n)$. In order to simplify the exposition, we abuse the notation referring to this $t$-uniform hypergraph simply as $C_t^n$ and to its balanced upper chromatic number as $\overline{\chi}_b(C_t^n)$. 

\subsection{Outline of the paper and results}\label{sec:problem}

Let $H$ be a $t$-uniform linear hypergraph on $v$ vertices and with $e$ hyperedges.  We want to study what is the maximum number of colors one may use to color its vertices such that rainbow edges are avoided. If there are fewer than $t$ colors provided, it is easy to see that no rainbow edge exists. On the other hand, if one has as many colors as vertices, every hyperedge is rainbow. Thus we obtain easily $t-1 \le \overline{\chi}_b(H) \le v-1$. In this work, we deep further into understanding the balanced upper chromatic number for linear hypergraphs. Our results consists of a general non-trivial upper bound on $\overline{\chi}_b(H)$ for any $t$-uniform linear hypergraph $H$, as well as a sharper analysis of the special case of the $n$-cube, where we determine the exact balanced upper chromatic number of $C_t^n$ for $t\geq 4n-2$, we explore bounds for this invariant for smaller values of $t$, and (except for a few cases) determine its behavior in dimensions $2$ and $3$.

In Section~\ref{sec:general_upper}, we present a general upper bound for the balanced upper chromatic number of linear hypergraphs. Observe that, to avoid rainbow lines, we need to have at least two points of the same color in each line. In this case, we say that the color \textit{blocks} the line. As $H$ is linear, any coloring with at most $e-1$ color classes of size two and all other classes (if any) of size one would fail to block all the lines. Provided that $v\geq 2e$, the smallest of such colorings has $v - e+1$ colors and thus
\begin{equation}\label{eq:easyupper}
\overline{\chi}_b(H) \le v - e.
\end{equation}
When $v < 2e$, a more involved analysis completes our first main theorem.

\begin{restatable}{theorem}{uppergeneral}
\label{thm:uppergeneral}
    Let $H$ be a linear hypergraph with $v$ vertices and $e$ hyperedges. Then 
    \begin{equation*}
        \overline{\chi}_b(H)\leq 
        \begin{cases}
            v-e & \text{if } v\geq 2e,\\
            \left\lceil 2(v+2e)-4\sqrt{e^2+ev}\right\rceil-1 & \text{if } v< 2e.
        \end{cases}
    \end{equation*}
\end{restatable}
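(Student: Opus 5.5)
The plan is a counting argument on the pairs of equally colored vertices, combined with the balance condition. Fix a balanced $k$-coloring of $H$ with no rainbow hyperedge. Every hyperedge must contain two vertices of the same color. Since $H$ is linear, each such monochromatic pair lies in at most one hyperedge. Hence the number $P$ of monochromatic pairs satisfies $P\ge e$.

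Next we compute $P$ exactly. Write $q=\lfloor v/k\rfloor$ and $r=v-kq$, so there are $r$ classes of size $q+1$ and $k-r$ classes of size $q$. Then
\[
P=(k-r)\binom{q}{2}+r\binom{q+1}{2}=k\binom{q}{2}+rq = qv-k\,\frac{q(q+1)}{2}.
\]
So the condition $P\ge e$ becomes
\[
k\le g(q):=\frac{2(qv-e)}{q(q+1)},
\]
where $q$ is a positive integer (namely $\lfloor v/k\rfloor$).

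\emph{Case $v\ge 2e$.} This was already handled before the statement: it gives \eqref{eq:easyupper}. Equivalently, it is the case where the relevant $q$ is $1$, and there $g(1)=v-e$.

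\emph{Case $v<2e$.} Here we bound $g(q)$ by its maximum over all real $q>0$. Setting $g'(q)=0$ gives $vq^2-2eq-e=0$, so $q^*=\bigl(e+\sqrt{e^2+ev}\bigr)/v$. Substituting back and rationalizing, using $(\sqrt{e^2+ev}-e)(\sqrt{e^2+ev}+e)=ev$, should simplify to
\[
g(q^*)=2(v+2e)-4\sqrt{e^2+ev}.
\]
The routine check is that $g$ increases and then decreases, so $q^*$ is the global maximum on $q>0$. This yields $k\le 2(v+2e)-4\sqrt{e^2+ev}$.

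The main obstacle is obtaining the exact form $\lceil\,\cdot\,\rceil-1$, i.e. strictness when the real bound is an integer. I would handle it as follows.
\begin{itemize}
\item Show that $k=g(q^*)$ with $q^*$ an integer cannot actually occur. Equality $P=e$ would force every monochromatic pair to lie in a distinct hyperedge, with exactly one such pair per hyperedge.
\item Use the balance condition (with $q=\lfloor v/k\rfloor$, $k\le v/q$) to exclude this extremal configuration. One way is to compare the value $k=v/q$ with $g(q)$.
\item If the strictness does not come out cleanly, I would instead derive, from $P\ge e$ via $k\binom{v/k}{2}$-type convexity, a strict quadratic inequality in $k$ whose root is the displayed expression. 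Solving it then gives $k<2(v+2e)-4\sqrt{e^2+ev}$, hence $k\le\lceil 2(v+2e)-4\sqrt{e^2+ev}\rceil-1$.
\end{itemize}
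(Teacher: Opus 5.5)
Your main argument is correct and takes a different, tidier route than the paper. Write $X=2(v+2e)-4\sqrt{e^2+ev}$ and $s=\sqrt{e^2+ev}$.

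\emph{The paper's route.} It fixes the number of colors $c$ and writes $v/c=\lfloor v/c\rfloor+\varepsilon$. It bounds the number of blockable edges by $\frac{1}{2c}(v-\varepsilon c)(v+\varepsilon c-c)=\frac{1}{2c}\left(v^2-vc+\varepsilon(1-\varepsilon)c^2\right)$. Then, assuming $c>X$, it shows by rationalizing and squaring that this is less than $e$. It does this separately for $\varepsilon=0$ and $0<\varepsilon<1$, using $\varepsilon(1-\varepsilon)\le\frac14$.

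\emph{Your route.} You keep the integer $q=\lfloor v/k\rfloor$ as the variable, which turns $P\ge e$ into the exact linear constraint $k\le g(q)$. You then maximize a single rational function. Your claim $g(q^*)=X$ is right: it reduces to $(v+2e-2s)(e+s)(e+s+v)=sv^2$, which follows from $s^2=e^2+ev$. This needs no case split in $\varepsilon$. It also shows exactly where tightness can occur: since $q^*$ is the unique maximizer, $k\le g(q)<X$ for every integer $q\neq q^*$.

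Both arguments establish only $k\le X$, i.e.\ $\overline{\chi}_b(H)\le\lfloor X\rfloor$. The paper's proof also only treats $c>X$.

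\emph{The gap.} Your strictness step cannot be closed, because the configuration you want to rule out really occurs. By your own computation, $q^*$ equals an integer $q$ exactly when $e(2q+1)=vq^2$, i.e.\ $v=(2q+1)m$ and $e=q^2m$. Then $s=mq(q+1)$, $X=2m$, and $v<2e$ iff $q\ge 2$.

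Take $H$ to be the disjoint union of $m$ copies of $K_q$ and $m$ copies of $K_{q+1}$, a linear $2$-uniform hypergraph. Give each component its own color. This is a balanced $2m$-coloring in which every edge is monochromatic, so $\overline{\chi}_b(H)\ge X>\lceil X\rceil-1$. The smallest case is $K_2\cup K_3$, with $v=5$, $e=4$, $X=2$.

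Higher uniformity does not help. Take color classes $\{a_1,a_2\}$, $\{a_3,a_4\}$, $\{b_1,b_2,b_3\}$, $\{c_1,c_2,c_3\}$ and the triples $b_1b_2c_1$, $b_1b_3c_2$, $b_2b_3c_3$, $c_1c_2a_1$, $c_1c_3a_2$, $c_2c_3a_3$, $a_1a_2b_1$, $a_3a_4b_2$. They cover $24$ distinct pairs, so they form a linear $3$-uniform hypergraph with $v=10$, $e=8$, $X=4$. The coloring by classes is balanced and rainbow-free.

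Your fallback bullet fails for the same reason. Note also that convexity of $\binom{x}{2}$ bounds $P$ from below, which is the wrong direction. For a balanced coloring with $\varepsilon=r/k$ one has $P=k\binom{v/k}{2}+\frac{k}{2}\varepsilon(1-\varepsilon)\le\frac{1}{2k}\left(v^2-vk+\frac{k^2}{4}\right)$. The associated quadratic $\frac{k^2}{4}-(v+2e)k+v^2\ge 0$ has smaller root exactly $X$. Equality is attainable at $\varepsilon=\frac12$, which is the situation in both examples.

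So drop those bullets. For $v<2e$, your argument proves $\overline{\chi}_b(H)\le\lceil X\rceil-1$ whenever $q^*\notin\mathbb{Z}$. When $v=(2q+1)m$ and $e=q^2m$, the correct bound is $X=2m$, and it can be attained, so the statement as written is not true for all such $H$.
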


This bound comes very close to the known upper bounds for the upper balanced chromatic number of finite projective planes, which constitute a special family of linear hypergraphs, which were given in \cite{akm, bbmns21}. When considering the $n$-cube $C_t^n$, in the case the number of vertices $t^n$ is at least as large as twice the number of lines, bound (\ref{eq:easyupper}) is translated by means of (\ref{eq:lines}) to
\begin{equation*}
     \overline{\chi}_b(C_t^n)\leq
         \dfrac{3t^n-(t+2)^n}{2}.
\end{equation*}

The condition on the number of lines and vertices corresponds to the hypothesis that $t\geq 2/(\sqrt[n]{2}-1)$ in Corollary \ref{cor:upper}. Surprisingly, this upper bound is best possible for $t$ large enough, as shown in Section~\ref{sec:mainresult} by a quite involved construction of a balanced coloring achieving this bound that uses Hall's Marriage Theorem. More precisely, our second main theorem states:

\begin{restatable}{theorem}{mainresult}
\label{thm:mainresult}
 For integers $n\geq 2$ and $t\geq 4n-2$, 
\begin{equation*}
    \overline{\chi}_b(C_t^n) =\frac{3t^n-(t+2)^n}{2}.
\end{equation*}
\end{restatable}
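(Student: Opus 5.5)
The equality of Theorem~\ref{thm:mainresult} needs a matching upper and lower bound; the lower bound, a balanced coloring with exactly $v-e$ colors, is the real work.

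\textbf{Upper bound.} Here $v=t^n$ and $e=\frac{(t+2)^n-t^n}{2}$. The first case of Theorem~\ref{thm:uppergeneral} gives $\overline{\chi}_b(C_t^n)\le v-e=\frac{3t^n-(t+2)^n}{2}$ as soon as $v\ge 2e$. That condition is $(1+2/t)^n\le 2$, i.e. $t\ge 2/(\sqrt[n]{2}-1)$, the hypothesis of Corollary~\ref{cor:upper}. Since $\sqrt[n]{2}-1\ge \frac{\ln 2}{n}$, it suffices that $t\ge 2n/\ln 2\approx 2.89n$, which $t\ge 4n-2$ implies for $n\ge 2$. This half is routine.

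\textbf{Lower bound: reduction to Hall.} Set $k=v-e$. Because $e\le v/2$, we have $k\ge v/2$, so a balanced $k$-coloring has all classes of size $1$ or $2$, with exactly $v-k=e$ classes of size $2$. Such a coloring has no rainbow line iff each of the $e$ lines contains one of the $e$ pairs. By linearity a pair lies in at most one line, so the pairs must be in bijection with the lines. The task therefore becomes: choose, for every line $L$, two points of $L$ so that all $2e$ chosen points are distinct; every remaining point gets its own color. This is a system of distinct representatives in the bipartite graph between $\mathcal{L}(C_t^n)$, with each line doubled, and $C_t^n$. By Hall's theorem it suffices to show $|N(\mathcal{S})|\ge 2|\mathcal{S}|$ for every set $\mathcal{S}$ of lines, where $N(\mathcal{S})$ is the set of points covered by $\mathcal{S}$. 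Equivalently, it suffices to exhibit a fractional assignment in which each point gives total weight at most $1$ and each line receives weight at least $2$; integrality of bipartite flows then yields the representatives.

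\textbf{Building the fractional assignment.} Plain double counting fails, since the center can lie on $(3^n-1)/2$ lines, far more than $t/2$. So the plan is to restrict each line to its \emph{generic} points and charge only those. Let $L$ have nonconstant coordinate set $J$, $|J|=m$, and constants $a_j$ for $j\notin J$. The point $\mathbf{x}_i$ of $L$ has $x_j\in\{i,t-1-i\}$ for $j\in J$. Call $\mathbf{x}_i$ generic for $L$ if $i\notin\{a_j,\,t-1-a_j: j\notin J\}$ and $i\neq t-1-i$; this excludes at most $2(n-m)+1$ indices. A generic point lies on no line that mixes coordinates of $J$ with coordinates outside $J$. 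Its lines therefore have direction set inside $J$ (determined by a sign pattern) or are axis-type lines in the coordinates outside $J$. This gives a degree bound depending only on $m$ and $n$, roughly $(3^m-1)/2+(n-m)$, and, more importantly, a clean description of which other lines share that point.

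I would then split each point's unit capacity among its incident lines in proportion to a weight favoring lines with few nonconstant coordinates: points that are generic for many high-$m$ lines are the structured "diagonal" points, which themselves lie on few low-$m$ lines. The aim is to verify that each line, through its at least $t-2(n-m)-1$ generic points, collects weight at least $2$. Taking $m=n$ would leave roughly $t-1$ generic points per line, while axis-parallel lines ($m=1$) have about $t-2n+1$ generic points shared with $n-1$ other axis directions; balancing these two regimes is where I expect a threshold like $t\ge 4n-2$ to emerge.

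\textbf{Main obstacle.} The hard step is choosing these weights and checking the Hall inequality uniformly over all $m$, since points lying on many long diagonals compete for capacity with axis-parallel lines. If a single weighting fails, I would first fall back on a two-phase argument: greedily assign representatives to the long diagonal lines near their ends, then apply Hall only to the axis-parallel lines, using that each such line still retains at least $t-2(n-1)-2\ge 2n-2$ unused points.
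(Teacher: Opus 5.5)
Your upper bound is correct and agrees with the paper. So is your reduction of the lower bound to a double-matching: the $e$ two-point color classes must be in bijection with the lines, each pair lying on its line.

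The gap is that the lower bound, which is the whole content of the theorem, is never established. No weights are specified, Hall's condition is not verified, and you leave this explicitly as the main obstacle. The structural picture you plan to build on is also inaccurate exactly where the difficulty lies:
\begin{itemize}
\item A generic point $\mathbf{x}_i$ of $L$ lies on exactly $2^m-1$ lines with direction set inside $J$, not $(3^m-1)/2$; the sign pattern is forced because $i\neq t-1-i$.
\item It also lies on \emph{every} line whose direction set $J'\subseteq[n]\setminus J$ has all constants $a_j$, $j\in J'$, in one symmetric pair, not only on $n-m$ axis-parallel lines.
\end{itemize}
Thus every point of an axis-parallel line $\langle i,\dots,i,b,i,\dots,i\rangle$ (e.g.\ $\langle b,0,\dots,0\rangle$) lies on at least $2^{n-1}$ lines. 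Every point $(i,\dots,i)$ of the main diagonal lies on at least $2^n-1$ lines, $n$ of which are such axis-parallel lines. Under a uniform split these two families receive at most $t/2^{n-1}$ and $t/(2^n-1)$, both below $2$ for $t=4n-2$ and $n\ge 4$, and they share points. Your remark that diagonal points lie on few low-$m$ lines does not address this competition.

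The fallback is not viable as described. For $n\ge 3$, suppose each line $\langle w,b,b,0,\dots,0\rangle$, $0\le w\le t-1$, takes its endpoint $(w,0,\dots,0)$ when it is free. Then every point of $\langle b,0,\dots,0\rangle$ is used up in the greedy phase, so the claim that each axis-parallel line keeps $t-2n$ free points has no basis. Even $2n-2$ free points per line would give only $(2n-2)/n<2$ under the natural split, since each point lies on $n$ axis-parallel lines.

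The missing idea is the one the paper uses: the reflection symmetry $x_i\mapsto t-1-x_i$. Points are grouped into classes $\overline{\mathbf{x}}$ of coordinatewise symmetric points. A line meets each class in $0$ or exactly $2$ points, symmetric about its center, so setting $f(L)=L\cap\overline{\mathbf{x}}$ for a suitable class automatically gives two points of $L$.

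The paper takes the class with constant coordinates $\widehat{l_i}$ and all moving coordinates equal to $w$. Here $w\notin\widehat{A}_L$ is the element added to $\widehat{A}_L$ by an explicit injection $g$ from $k$-subsets to $(k+1)$-subsets of $\{0,\dots,\lceil t/2\rceil-1\}$ with $S\subset g(S)$. This is Aigner's matching between consecutive levels, available because $k\le n-1\le\frac12(\lceil t/2\rceil-1)$.

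Any chosen point $\mathbf{y}$ then determines its line. Its set of hat-values is $g(\widehat{A}_L)$, which gives $\widehat{A}_L$ by injectivity, hence $w$. Because $w\notin\widehat{A}_L$, this in turn gives the moving coordinates, the constants $l_i=y_i$, and the sign pattern. So distinct lines get disjoint pairs. This is exactly where $t\ge 4n-2$ enters, together with keeping $w\neq (t-1)/2$ when $t$ is odd.

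Since a double-matching exists, a weighting of the kind you describe exists in principle. As written, however, your proposal proves only the upper bound and the reduction.
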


In Section~\ref{sec:small_t}, we provide a lower bound for $\overline{\chi}_b(C_t^n)$ for even $t<4n-2$, see Theorem~\ref{th:lowernew2}. Finally, in Section \ref{sec:conclusions}, we present a summary of our results (see the table in Figure \ref{fig:summary}), which includes the exact balanced upper chromatic (with few exceptions) in dimensions $2$ and $3$; as well as a conjecture and several open problems for future work. Preliminary versions of this paper appeared in \cite{DMD24,CCCG24}.

\section{General upper bound for linear hypergraphs}\label{sec:general_upper} 

We start by presenting an upper bound for the balanced upper chromatic number of any linear hypergraph.

\uppergeneral*

\begin{proof}
    The case when $v\geq 2e$ was already settled in the Introduction in Expression~(\ref{eq:easyupper}). Thus, we may assume that $v< 2e$.
    Consider a balanced $c$-coloring of $H$ for some integer $2\leq c \leq v$. Then there is an integer $1\leq k <  v$ such that all color classes are of size $k$ or possibly $k+1$. Let $c_k\geq1$ and $c_{k+1}\geq 0$ be the number of classes of size $k$ and $k+1$, respectively. Then $c=c_k+c_{k+1}$ and $v=kc_k+(k+1)c_{k+1}=ck+c_{k+1}$. Since $0\leq c_{k+1}<c$, then $k$ and $c_{k+1}$ are the quotient and the remainder, respectively, when $v$ is divided by $c$. That is, $k=\lfloor \frac{v}{c} \rfloor$ and $c_{k+1}=v-c\lfloor \frac{v}{c} \rfloor$. Let $\varepsilon=\frac{c_{k+1}}{c}=\frac{v}{c} -\lfloor \frac{v}{c} \rfloor$. Thus $c_{k+1}=\varepsilon c$ and $k=\frac{v}{c}-\varepsilon$, where $0\leq \varepsilon<1$.    We say that a color \textit{blocks} a hyperedge if at least two vertices of the hyperedge receive that color. So an \textit{unblocked} edge is a rainbow edge. Note that at most $\binom{k}{2}c_k+\binom{k+1}{2}c_{k+1}=\binom{k}{2}c+kc_{k+1}$ hyperedges can be blocked by the distinct colors in the $c$-coloring. Hence, if 

    \begin{align}\label{eq:1rainbow}
        e >\binom{k}{2}c+kc_{k+1} 
        &=\binom{\frac{v}{c}-\varepsilon}{2}c+\left(\frac{v}{c}-\varepsilon\right) \varepsilon c \nonumber \\
        &=\frac{1}{2}\left(\frac{v}{c}-\varepsilon\right)(v+\varepsilon c-c) \nonumber \\
        &=\frac{1}{2c}\left(v-\varepsilon c\right)(v+\varepsilon c-c),
    \end{align}
then there is at least one rainbow hyperedge. 

     Note that (rationalizing)
     \begin{multline}\label{eq:rationalization}
         c>2(v+2e)-4\sqrt{e^2+ev}=\frac{2v^2}{v+2e+2\sqrt{(e^2+ev)}} \\
         =  \frac{2v^2}{v+2e+\sqrt{(v+2e)^2-v^2}}.
     \end{multline}
    Because $(v+2e)^2-v^2< (v+2e)^2$, it follows that $ c>\frac{v^2}{v+2e}$.
    This implies that $e>\frac{v(v-c)}{2c}$, which is precisely Inequality (\ref{eq:1rainbow}) when $\varepsilon=0$. 
    
    Assume now that $0<\varepsilon<1$. Thus $0 < \varepsilon(1-\varepsilon)\leq \frac{1}{4}$. Since $2e>v$ and $v\geq c$, we have that
    \begin{equation}\label{eq:positive}
        \frac{v+2e}{2\varepsilon(1-\varepsilon)}> \frac{v}{\varepsilon(1-\varepsilon)}\geq 4v>v\geq c.
    \end{equation}
    Also, $v^2\geq 4v^2\varepsilon(1-\varepsilon)$. By Inequality (\ref{eq:rationalization}) and rationalizing, we obtain
    \begin{align}\label{eq:otherside}
        c >\frac{2v^2}{v+2e+\sqrt{(v+2e)^2-4v^2\varepsilon(1-\varepsilon)}} \nonumber \\
        &=\frac{v+2e-\sqrt{(v+2e)^2-4v^2\varepsilon(1-\varepsilon)}}{2\varepsilon(1-\varepsilon)}.
    \end{align}
    Inequalities (\ref{eq:positive}) and (\ref{eq:otherside}) imply that
    \begin{equation*}
        \frac{\sqrt{(v+2e)^2-4v^2\varepsilon(1-\varepsilon)}}{2\varepsilon(1-\varepsilon)}>
        \frac{v+2e}{2\varepsilon(1-\varepsilon)}-c>0.
    \end{equation*}
    Multiplying by $2\varepsilon(1-\varepsilon)$ and squaring, we obtain
  
    \begin{align*}
        (v+2e)^2-4v^2\varepsilon(1-\varepsilon)&>(v+2e-2\varepsilon(1-\varepsilon)c)^2\\
        -4v^2\varepsilon(1-\varepsilon)&>-4(v+2e)\varepsilon(1-\varepsilon)c+4(\varepsilon(1-\varepsilon)c)^2\\
         -v^2&>-(v+2e)c+\varepsilon(1-\varepsilon)c^2\\
         e&>\frac{1}{2c}\left(v^2-vc+\varepsilon c^2 -\varepsilon^2 c^2\right) \\
         &=\frac{1}{2c}\left(v-\varepsilon c\right)(v+\varepsilon c-c),
    \end{align*}
    This is again Inequality (\ref{eq:1rainbow}) and thus there must be a rainbow hyperedge.
\end{proof}

As was mentioned in the Introduction, the special case of the linear hypergraphs of finite projective planes were studied in \cite{akm, bbmns21}. Since the finite projective planes $\Pi_q$ of order $q$ have the same number of lines (of size $q+1$) and points, namely $v = e = q^2 + q + 1$, the upper bound above, which in this setting falls into the second case, gives $\overline{\chi}_b(\Pi_q)\leq (6-4\sqrt{2}) v \approx 0.34 v$, which falls very close to the previous bound of $\overline{\chi}_b(\Pi_q)\leq v/3$ \cite{akm, bbmns21}.

The following result is a direct application of Theorem~\ref{thm:uppergeneral} with $v=|C_t^n|=t^n$ and $e=|\mathcal{L}(C_t^n)|=((t+2)^n-t^n)/2$.

\begin{corollary}\label{cor:upper} 
Let $t$ and $n$ be positive integers. Then
\begin{equation*}
     \overline{\chi}_b(C_t^n)\leq
     \begin{cases}
         \dfrac{3t^n-(t+2)^n}{2} & \text{if\hspace{.1in}} t\geq\dfrac{2}{\sqrt[n]{2}-1},\\
         \left\lceil 2(t+2)^n-2\sqrt{(t+2)^{2n}-t^{2n}}\right\rceil-1 & \text{if\hspace{.1in}} 2\leq t < \dfrac{2}{\sqrt[n]{2}-1}. 
     \end{cases}
\end{equation*}
\end{corollary}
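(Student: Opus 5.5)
The plan is to substitute $v=t^n$ and $e=\frac{(t+2)^n-t^n}{2}$ directly into Theorem~\ref{thm:uppergeneral} and simplify both cases. This uses the facts that $C_t^n$ is a linear hypergraph and that its number of lines is given by (\ref{eq:lines}), both of which were noted in Section~\ref{sec:n-cube_definition}.

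\emph{Case threshold.} First I translate the dichotomy $v\geq 2e$ versus $v<2e$ into a condition on $t$. The inequality $v\geq 2e$ reads $t^n\geq (t+2)^n-t^n$, that is, $2t^n\geq (t+2)^n$. Taking $n$-th roots gives $\sqrt[n]{2}\,t\geq t+2$, which is equivalent to $t\geq \frac{2}{\sqrt[n]{2}-1}$. Since all quantities are positive, every step is reversible, so the two regimes correspond exactly to $t\geq 2/(\sqrt[n]{2}-1)$ and $t<2/(\sqrt[n]{2}-1)$.

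\emph{First case.} Here Theorem~\ref{thm:uppergeneral} gives $v-e=t^n-\frac{(t+2)^n-t^n}{2}=\frac{3t^n-(t+2)^n}{2}$, which is the claimed bound.

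\emph{Second case.} I compute $v+2e=t^n+(t+2)^n-t^n=(t+2)^n$. Next, $e^2+ev=e(e+v)=\frac{(t+2)^n-t^n}{2}\cdot\frac{(t+2)^n+t^n}{2}=\frac{(t+2)^{2n}-t^{2n}}{4}$, so $4\sqrt{e^2+ev}=2\sqrt{(t+2)^{2n}-t^{2n}}$. Substituting these into $\lceil 2(v+2e)-4\sqrt{e^2+ev}\rceil-1$ yields exactly the second expression of the corollary.

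There is no real obstacle: the only point needing care is the algebraic identity $e(e+v)=\frac{(t+2)^{2n}-t^{2n}}{4}$, which is a difference of squares. One should also note that $t\geq 2$ guarantees the hypergraph is nondegenerate, so that Theorem~\ref{thm:uppergeneral} applies.
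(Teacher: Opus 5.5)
Your proposal is correct and follows the paper's route exactly. The paper obtains the corollary as a direct application of Theorem~\ref{thm:uppergeneral} with $v=t^n$ and $e=\frac{(t+2)^n-t^n}{2}$, and your computations ($v\ge 2e \iff t\ge 2/(\sqrt[n]{2}-1)$, $v-e=\frac{3t^n-(t+2)^n}{2}$, $v+2e=(t+2)^n$, and $e(e+v)=\frac{(t+2)^{2n}-t^{2n}}{4}$) just spell out the algebra the paper leaves to the reader.
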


It can be checked that when $t=2$, this result implies $\overline{\chi}_b(C_2^n)=1$, as mentioned before.

\section{The exact value of $\overline{\chi}_b(C_t^n)$ for large $t$}\label{sec:mainresult}

In order to prove Theorem~\ref{thm:mainresult}, we provide an explicit balanced $(\frac{3t^n-(t+2)^n}{2})$-coloring of $C_t^n$ with no rainbow lines, for every $n\geq 2$ and $t\geq 4n-2$, attaining thus the upper bound in Corollary \ref{cor:upper}. This is equivalent to proving the existence of what we call a \emph{double-matching} of $C_t^n$, which could be intuitively described as a ``disjoint'' selection of two points per line. Hence, one could also refer to a double-matching as a \emph{double-covering}, a \emph{double-transversal}, or a \emph{double-SDR} (a ``double'' System of Distinct Representatives).
To complete this task, we use an auxiliary injective function defined in Section~\ref{sec:injective_function} and heavily make use of the symmetry of the lines within the $n$-cube as described in Section~\ref{sec:equivalence}. The double-matching is provided in Section~\ref{sec:matching}, which includes the core of the proof.

\subsection{An injective function}\label{sec:injective_function}

Let $m$ and $k$ be integers with $0\leq k\leq \frac{1}{2}(m-1)$. The Hall's Marriage Theorem guarantees the existence of an injective function $$g_{m,k}:\binom{\{0,1,2,\dots,m-1\}}{k} \rightarrow \binom{\{0,1,2,\dots,m-1\}}{k+1}$$ such that, for any $S\subset \{0,1,2,\dots,m-1\}$ with $k$ elements, it holds that $S\subset g_{m,k}(S)$. An explicit such function $g_{m,k}$ was given in \cite{A73}. This function, adapted to our setting, can be defined as follows. Given $S=\{s_1,s_2,\dots, s_k\}\subset \{0,1,2, \dots, m-1\}$ where $s_1<s_2<\dots< s_k$, let $s_0=-1$, $r=\min \{s_i-2i:i\in \{0,1,2,\dots,k\}\}$, and $\phi(S)=\max\{i\in \{0,1,2,\dots,k\}:s_i-2i=r\}$, the largest of the subindices for which $s_i-2i$ is as small as possible. 
Then $g_{m,k}(S)=S\cup \{1+s_{\phi(S)}\}$. Note that when $k=0$, we have that $S=\emptyset$, $\phi(S)=0$, and $g_{m,k}(S)=\{0\}$. 

\subsection{Symmetric pairs and equivalence classes}\label{sec:equivalence}

Let $t\geq 2$ and $0\leq m \leq t-1$ be integers. Let $\overline{m}=\overline{t-1-m}=\{m,t-1-m\}$. We say that $m$ and $t-1-m$ are \emph{symmetric} and refer to $\overline{m}$ as a \emph{symmetric pair}. Further, we write $\widehat{m}$.
to refer to the smallest element in the pair $\overline{m}$. It is important to note that when $t$ is odd and $m=(t-1)/2$, we have that $m=t-1-m$ and so $\overline{m}$ consists of a single element. For convenience, we still call $\overline{(t-1)/2}$ a symmetric \emph{pair} observing that it is actually not used later in the proof when an actual pair of points is selected.

We partition the set of points $C_t^n$ into equivalence classes. We say that two points $(x_1,x_2,\dots,x_n)$ and $(y_1,y_2,\dots,y_n)$ are equivalent if $x_i=y_i$ or $x_i+y_i=t-1$ for all $1\leq i\leq n$, that is, if each pair of corresponding entries is a symmetric pair. The equivalence class containing the point ${\bf x}=(x_1,x_2,\dots,x_n)$ is denoted by $\overline{{\bf x}}=\{(y_1,y_2,\dots,y_n):y_i\in\overline{x_i} \text{ for all }1\leq i \leq n\}$. 

Note that two points in the same line are equivalent if and only if they are symmetric around the center of the line. In other words, a line and an equivalence class either intersect in exactly two points or do not intersect at all. For simplicity, if the elements of $A$ are listed, we avoid the braces when applying any functions to $A$. For example, we typically write  $\phi(3,4)$ instead of $\phi(\{3,4\})$.

\subsection{A double-matching}\label{sec:matching}
Let $t\geq 4n-2$ be an integer. In what follows, we associate each line in $\mathcal{L}(C_t^n)$ with two of its points in such a way that each point is associated to at most one line. We refer to this association as a \emph{double-matching} for $C_t^n$. Moreover, our double-matching satisfies that the two points associated to each line are equivalent. We present this matching as a function $f:\mathcal{L}\left(C_t^n\right)\rightarrow \binom{\, C_t^n}{2} \,$, where $f(L)\subset L$ and $f(L)\cap f(L')=\emptyset$ for any two distinct lines $L,L'\in \mathcal{L}(C_t^n)$.  

For a given line $L\in \mathcal{L}\left(C_t^n\right)$ with vector $\langle l_1,l_2,\dots,l_n \rangle$, let $A_L = \{ l_i :  l_i \in\{0,1,2,\dots,t-1\}\}$ and $\widehat{A}_L=\{\widehat{l_i}:l_i\in A_L\}$. Then $\widehat{A}_L\subset \{0,1,\dots,\lceil\frac{t}{2}\rceil-1\}$, and, setting $k =|\widehat{A}_L|$, we have $0\leq k \leq n-1\leq  \frac{1}{2}(\frac{t}{2}-1) \leq \frac{1}{2}(\lceil\frac{t}{2}\rceil-1)$. Observe that selecting a point ${\bf x}_w \in L$ means fixing an index $w\in [0,t-1]$  such that the $i^{th}$ entry of $x_w$ satisfies that
\begin{equation*}
    x_{w,i}=
    \begin{cases}
        l_i & \text{if } l_i \in A_L,\\
        w & \text{if } l_i =b,\\
        t-1-w & \text{if } l_i =c.
    \end{cases}
\end{equation*} 
The choice of $w$ is made by means of the extra element assigned to the set $\widehat{A}_L=\{s_1,s_2,\dots,s_k\}$ with $s_1< s_2<\dots< s_k$ by the injective function $g_{\lceil\frac{t}{2}\rceil,k}$. More precisely,  we define

\begin{equation}\label{eq:inclusion}
    f(L)= L \cap \overline{\bf x},
    \end{equation}
where 
\begin{equation*}
    x_i=
    \begin{cases}
        \widehat{l_i} & \text{ if }l_i\in A_L,\\
       1+s_{\phi(\widehat{A}_L)} & \text{ if }l_i\notin A_L.
    \end{cases}
\end{equation*}

Then the set of different coordinates of ${\bf x}$ is equal to $\widehat{A}_L\cup\{1+s_{\phi(\widehat{A}_L)}\}=g_{\frac{t}{2},k}(\widehat{A}_L)$. This means, in particular, that $1+s_{\phi(\widehat{A}_L)} \in \{ 0,1,\dots,\lceil\frac{t}{2}\rceil-1\} \setminus \widehat{A}_L$, and that $x_i \in \{0,1,\dots,\lceil\frac{t}{2} \rceil-1\}$ for all $1 \le i \le n$. 
Hence, if the point ${\bf y}$ is one of the two points matched to $L$, then its entries have the form
\begin{equation}\label{eq:point1}
\begin{tabular}{lrr}
      $y_i=
    \begin{cases}
        l_i & \text{ if }l_i\in A_L,\\
        1+s_{\phi(s_L)} & \text{ if }l_i=b,\\
        t-1-\left(1+s_{\phi(\widehat{A}_L)}\right) & \text{ if }l_i=c,
    \end{cases}$
    & \hspace{.1in } 
    \end{tabular}  \\
\end{equation}
for all $1\leq i\leq n$,  or
\begin{equation}\label{eq:point2}
 \begin{tabular}{lr}
    $y_i=
    \begin{cases}
        l_i & \text{ if }l_i\in A_L,\\
        t-1-\left(1+s_{\phi(\widehat{A}_L)}\right) & \text{ if }l_i=b,\\
       1+s_{\phi(\widehat{A}_L)} & \text{ if }l_i=c,
    \end{cases}$
    & \hspace{.1in } 
 \end{tabular} 
\end{equation}
for all $1\leq i\leq n$.

\noindent {\bf{Example:}} \\
Following up the geometric line $L\in \mathcal{L}(C^{10}_{41})$  defined in Section~\ref{sec:n-cube_definition},  with vector 
$\langle 21,b,37,6,b,c,6,34,2,10 \rangle$, we have that $A_L=\{21,37,6,34,2,10\}$ and  $\widehat{A}_L=\{\widehat{21},\widehat{37},\widehat{6},\widehat{34},\widehat{2},\widehat{10}\}=\{2,3,6,10,19\}$ because 
$\widehat{21}=19, \widehat{37}=3, \widehat{6}=\widehat{34}=6, \widehat{2}=2$ and $\widehat{10}=10$. Applying the function $\phi$ in Section~\ref{sec:injective_function} to $S=\widehat{A}_L$, we have $s_0=-1,s_1=2, s_2=3, s_3=6, s_4=10, s_5=19$, $r=\min\{s_i-2i:0\leq i\leq 5\}=\min\{-1,0,-1,0,2,9\}=-1$, and the maximum of the subindices achieving $r$ is  $\phi(S)=\phi(\widehat{A}_L)=\max\{0,2\}=2$. 

This yields 
\begin{align*}
    {\bf{x}} &= (19,1+s_2,3,6,1+s_2,1+s_2,6,6,2,10) \\\ &=
(19,4,3,6,4,4,6,6,2,10),
\end{align*}
and 

\begin{align*}
    f(L)&= L \cap \overline{\bf x} \\\ &=\{(21,4,37,6,4,36,6,34,2,10), (21,36,37,6,36,4,6,34,2,10)\} \\\
    &=\{  \textbf{x}_4,   \textbf{x}_{36}\}.
\end{align*}

\begin{theorem}\label{thm:dm}
    The function $f$ is a double-matching of $C_t^n$ for any integers $n\geq 2$ and $t\geq 4n-2$.
\end{theorem}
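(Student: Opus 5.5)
The plan is to verify the two defining properties of a double-matching directly from the definition~(\ref{eq:inclusion}). First, $f(L)$ consists of exactly two distinct points of $L$. Second, $f(L)\cap f(L')=\emptyset$ whenever $L\neq L'$. Both parts rest on the properties of the injective function $g_{m,k}$ from Section~\ref{sec:injective_function}, together with one quantitative bound on the new element $w:=1+s_{\phi(\widehat{A}_L)}$.

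\emph{Step 1 (range of $w$).} Since $s_0-2\cdot 0=-1$, the minimum $r$ satisfies $r\le -1$. Hence $s_{\phi}-2\phi\le -1$, so $s_\phi\le 2\phi-1\le 2k-1$ and $w\le 2k\le 2n-2$. Because $t\ge 4n-2$, we have $(t-1)/2\ge 2n-\tfrac32>2n-2\ge w$. Therefore $w<(t-1)/2$, and in particular $w\le\lceil t/2\rceil-1$. By the defining property of $g$, we also have $w\notin\widehat{A}_L$. This is the step I expect to need the most care, since it is exactly where the hypothesis $t\ge 4n-2$ is used: for odd $t$ it rules out $w=(t-1)/2$, which would collapse the two points into one.

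\emph{Step 2 ($f(L)$ is a pair of points of $L$).} The point $\mathbf{x}_w\in L$ has fixed coordinates $l_i$ and moving coordinates $w$ or $t-1-w$. Each of these lies in the symmetric pair of the corresponding coordinate of $\mathbf{x}$, so $\mathbf{x}_w\in L\cap\overline{\mathbf{x}}$. By the observation in Section~\ref{sec:equivalence}, a line meets an equivalence class in either $0$ or exactly $2$ points, namely points symmetric about the center of the line. Hence $f(L)=\{\mathbf{x}_w,\mathbf{x}_{t-1-w}\}$, and these two points are distinct because $w\neq t-1-w$ by Step 1.

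\emph{Step 3 (disjointness).} Suppose $\mathbf{y}\in f(L)\cap f(L')$. Then $\overline{\mathbf{y}}=\overline{\mathbf{x}}=\overline{\mathbf{x}'}$. Since $\mathbf{x}$ and $\mathbf{x}'$ both have all coordinates in $\{0,\dots,\lceil t/2\rceil-1\}$, they are the canonical representatives $(\widehat{y_1},\dots,\widehat{y_n})$, so $\mathbf{x}=\mathbf{x}'$. Thus $g(\widehat{A}_L)=g(\widehat{A}_{L'})$, the set of coordinate values of $\mathbf{x}$. Comparing sizes gives $k=k'$, and injectivity of $g_{\lceil t/2\rceil,k}$ yields $\widehat{A}_L=\widehat{A}_{L'}$. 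Consequently both lines have the same extra value $w$.

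\emph{Step 4 (recovering the line).} Since $w\notin\widehat{A}_L$, the moving coordinates of $L$ (and likewise of $L'$) are exactly the indices $i$ with $\widehat{y_i}=w$. So $L$ and $L'$ have the same set of moving coordinates, and on the remaining coordinates $l_i=y_i=l'_i$. On the moving coordinates, $y_i\in\{w,t-1-w\}$ with $w\neq t-1-w$. The coordinates labelled $b$ are exactly those sharing the value $y_{i_0}$ at the first moving index $i_0$, and those labelled $c$ take the other value; this holds whether $\mathbf{y}$ is of the form~(\ref{eq:point1}) or~(\ref{eq:point2}). The convention that the first moving coordinate is $b$ fixes this labelling, so it is the same for $L$ and $L'$. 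Hence $\langle l_1,\dots,l_n\rangle=\langle l'_1,\dots,l'_n\rangle$, i.e.\ $L=L'$, which proves that $f$ is a double-matching.
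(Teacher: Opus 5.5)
Your proposal is correct and follows essentially the same route as the paper. You bound $w=1+s_{\phi(\widehat{A}_L)}\le 2n-2<(t-1)/2$ to get two distinct points. You then identify the coordinate set $\{\widehat{y_i}\}$ with $g(\widehat{A}_L)=g(\widehat{A}_{L'})$ and use injectivity to get the same $w$. Finally you recover $\langle l_1,\dots,l_n\rangle$ from $\mathbf{y}$ via the first moving coordinate. Your passage through the canonical representative $\mathbf{x}=\mathbf{x}'$ is just a rephrasing of the paper's set $B$.
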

\begin{proof}
Using the same notation as the one used above for the definition of the function $f$, we first argue that $f$ is well-defined. Although, it is clear (by definition) that the points given by (\ref{eq:point1}) and (\ref{eq:point2}) are on the line $L$, we need to verify that these two points are actually different. This is clear when $1+s_{\phi(\widehat{A}_L)}\neq t-1-(1+s_{\phi(\widehat{A}_L)})$. But these two values could potentially be equal when $t$ is odd and $1+s_{\phi(\widehat{A}_L)}=\frac{t-1}{2}$. We note that this cannot happen due to how $s_{\phi(\widehat{A}_L)}$ is chosen. Indeed, since $\phi(\widehat{A}_L)\leq k\leq n-1$ and $s_{\phi(\widehat{A}_L)}-2\phi(\widehat{A}_L)\leq s_i-2i$ for all $0\leq i \leq k$, then (using $i=0$) we have
\[s_{\phi(\widehat{A}_L)} \le 2\phi(\widehat{A}_L) - 1 \le 2k-1 \le 2(n-1) -1 = 2n-3.\]
Moreover, the condition $t\geq 4n-2$ for $t$ odd is equivalent to $t\geq 4n-1$ and thus 
\[1+s_{\phi(\widehat{A}_L)} \le 2n-2 \le 2 \left( \frac{t+1}{4}\right) - 2 = \frac{t-3}{2}<\frac{t-1}{2}.\]

We now check the two conditions $f(L)\subset L$ and $f(L)\cap f(L')=\emptyset$ for any two distinct lines $L,L'\in \mathcal{L}(C_t^n)$. The first condition is guaranteed by (\ref{eq:inclusion}). To prove the second condition, suppose that ${\bf y} \in f(L)\cap f(L')$ for some point ${\bf y}=(y_1,y_2,\dots,y_n)\in C_t^n$ and for some lines $L,L'\in \mathcal{L}(C_t^n)$ identified with the vectors $\langle l_1,l_2,\dots,l_n \rangle$ and $\langle l'_1,l'_2,\dots,l'_n \rangle$, respectively. We prove that $L=L'$. Consider the set $B = \{\widehat{y_i} : 1 \leq i \leq n\}$ and say $|B|=k+1$. By definition of $f$, we have that $B = g_{\lceil\frac{t}{2}\rceil,k}(\widehat{A}_L)=g_{\lceil\frac{t}{2}\rceil,k}(\widehat{A}_{L'})$. Since $g_{\lceil\frac{t}{2}\rceil,k}$ is injective, then $\widehat{A}_L=\widehat{A}_{L'}$. By definition of $g$, we have that $B=\widehat{A}_L \cup \{1+s_{\phi(\widehat{A}_L)}\}=\widehat{A}_{L'} \cup \{1+s_{\phi(\widehat{A}_{L'})}\}$ and thus $
1+s_{\phi(\widehat{A}_L)}=1+s_{\phi(\widehat{A}_{L'})}$. Let $
w=1+s_{\phi(\widehat{A}_L)}=1+s_{\phi(\widehat{A}_{L'})}$ and $j$ be the smallest index $i$ such that $\widehat{y_i}=w$. Note that whenever $y_i\in \{w,t-1-w\}$, we have that $l_i,l'_i\in\{b,c\}$; and when $y_i\notin \{w,t-1-w\}$, we have that $l_i,l'_i\in\{0,1,2,\dots,t-1\}$.  More precisely,
\begin{equation*}
    l_i=l'_i=
    \begin{cases}
        y_i & \text{ if } \widehat{y_i}\in B\setminus \{w\},\\
        b &\text{ if } y_i=y_j,\\
        c &\text{ if } y_i=t-1-y_j.
    \end{cases}
\end{equation*}
Therefore, $L=L'$ concluding the proof. \end{proof}

Figure \ref{fig:dim3symm} shows a visual example of this matching when $n=3$ and $t=12$. In this figure, the pairs of red points on the same horizontal line are assigned to that line; the pairs of green points on the same vertical line are assigned to that line; and in general, the pairs of points with the same color $c$ on a geometric line $L$ and in the same direction of the a line with color $c$ indicated by the key at the bottom of the figure are assigned to $L$.

\begin{figure}[htb]
    \centering
    \includegraphics[scale=.6]{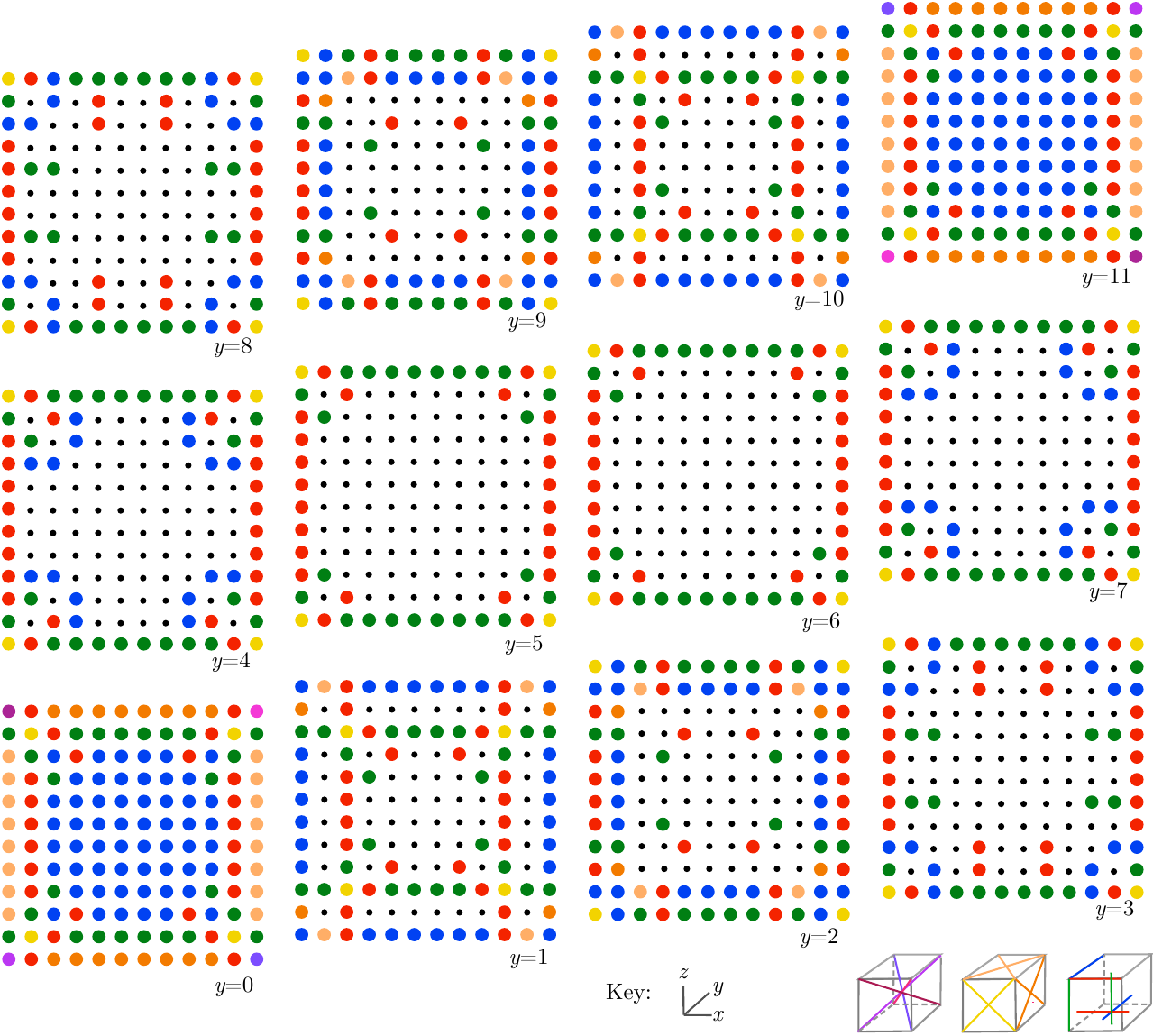}
    \caption{A double-matching for $C_{12}^3$.
    }
    \label{fig:dim3symm}
\end{figure}

\subsection{Proof of Theorem \ref{thm:mainresult}}
\label{sec:matching_to_coloring}

We are now in a position to prove our second main result, Theorem~\ref{thm:mainresult}, which provides the exact value of the balanced upper chromatic number of $C_t^n$ for $n \geq 2$ and $t \geq 4n - 2$. The proof relies on the existence of the double-matching constructed in Section~\ref{sec:matching}.

\mainresult*

\begin{proof}
Note that $t\geq 4n-2> 2/(\sqrt[n]{2}-1)$ for any $n\geq 2$ \cite{CCCG24}. The upper bound, $\overline{\chi}_b(C_t^n) \leq (3t^n-(t+2)^n)/2$, follows directly from Corollary \ref{cor:upper}. To prove the lower bound, we need to show that there is a balanced $(3t^n-(t+2)^n)/2$-coloring of $C_t^n$ with no rainbow lines. 
By Identity (\ref{eq:lines}) and since $t>2/(\sqrt[n]{2}-1)$, we have
\begin{align*}
    \frac{3t^n-(t+2)^n}{2} 
    &=t^n-\frac{(t+2)^n-t^n}{2} \\
    &=|C_t^n|-|\mathcal{L}(C_t^n)| \\
    &<|C_t^n| 
    =t^n< 3t^n-(t+2)^n.
\end{align*}

That is, the number of points in $C_t^n$ is strictly between the number of colors and twice the number of colors. Hence, in a balanced $(3t^n-(t+2)^n)/2$-coloring of $C_t^n$ each color appears once or twice. More precisely,
$|\mathcal{L}(C_t^n)|$ colors appear twice and the remaining $|C_t^n|-2|\mathcal{L}(C_t^n)|$ colors appear once. To avoid rainbow lines, each of the $|\mathcal{L}(C_t^n)|$ colors that appears twice must block a geometric line. To achieve this, we use the same color for the two points assigned to each line in $\mathcal{L}(C_t^n)$ by the function $f$, using different colors for different lines. The remaining $|C_t^n|-2|\mathcal{L}(C_t^n)|$ points use the $|C_t^n|-2|\mathcal{L}(C_t^n)|$ colors that appear once.
\end{proof}

\section{Bounds on $\overline{\chi}_b(C_t^n)$ for small $t$}\label{sec:small_t}

As we just showed, Theorem~\ref{thm:mainresult} gives exact values of the balanced upper chromatic number of the $n$-cube $\overline{\chi}_b(C_t^n)$ for $t\geq 4n-2$. In this section, we focus our attention on giving specific lower bounds for smaller values of $t$, namely, $2<t<2/(\sqrt[n]{2}-1)$. In Lemma \ref{thm:lowernew}, we present a \textit{recursive} lower bound, which, together with the lower bound for $\overline{\chi}_b(C_4^n)$ in Lemma \ref{prop:t=4}, improves the lower bound in Inequality \ref{eq:first_nt_lower} (see \cite{montejano}) for even $t$ as stated in Theorem \ref{th:lowernew2}.

\begin{lemma}\label{thm:lowernew}
Let $t$ and $n$ be positive integers and suppose that $t$ has a proper divisor $d$, $1<d<t$. Then $\overline{\chi}_b(C_t^n)\geq \left(\frac{t}{d}\right)^n\overline{\chi}_b(C_d^n)$.
\end{lemma}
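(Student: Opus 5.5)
The plan is a product (blow-up) construction. Write $t=qd$ with $q=t/d$, and fix a balanced coloring $\chi$ of $C_d^n$ with $K=\overline{\chi}_b(C_d^n)$ colors and no rainbow geometric lines. Every integer $x\in\{0,\dots,t-1\}$ can be written uniquely as $x=dj+r$ with $0\le j\le q-1$ and $0\le r\le d-1$. For a point ${\bf x}\in C_t^n$ let ${\bf j}({\bf x})=(j_1,\dots,j_n)$ be its \emph{block vector} and ${\bf r}({\bf x})=(r_1,\dots,r_n)\in C_d^n$ its \emph{residue vector}. Color ${\bf x}$ with the pair $\bigl(\chi({\bf r}({\bf x})),\,{\bf j}({\bf x})\bigr)$. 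This uses at most $q^nK=(t/d)^n\,\overline{\chi}_b(C_d^n)$ colors.

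First I will check that the coloring is balanced and uses exactly $q^nK$ colors. For each fixed block vector ${\bf j}$, the map ${\bf x}\mapsto{\bf r}({\bf x})$ is a bijection from the sub-box $\{{\bf x}:{\bf j}({\bf x})={\bf j}\}$ onto $C_d^n$. Hence the color class of $(\kappa,{\bf j})$ has exactly the size of the class $\chi^{-1}(\kappa)$ in $C_d^n$. So all $q^nK$ colors occur, and the class sizes are precisely the sizes of the classes of $\chi$, which differ by at most one.

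The main step is to show that no line of $C_t^n$ is rainbow. Take a line $L=\langle l_1,\dots,l_n\rangle$ with points ${\bf x}_w$, $0\le w\le t-1$, and restrict to the $d$ points with $w=r$, $0\le r\le d-1$ (the first block). I will verify coordinate by coordinate:
\begin{itemize}
\item if $l_i=a$ is constant, both the block index $\lfloor a/d\rfloor$ and the residue $a \bmod d$ are constant;
\item if $l_i=b$, the coordinate is $w=r$, with block index $0$ and residue $r$;
\item if $l_i=c$, the coordinate is $t-1-r=(q-1)d+(d-1-r)$, with block index $q-1$ and residue $d-1-r$.
\end{itemize}
Thus these $d$ points share a common block vector, and their residue vectors are exactly the points of the geometric line of $C_d^n$ with vector $\langle l'_1,\dots,l'_n\rangle$, where $l'_i=a\bmod d$ for constant entries and $l'_i=l_i$ for entries in $\{b,c\}$. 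This is a legitimate line of $C_d^n$, since $L$ has at least one entry in $\{b,c\}$. Because $\chi$ has no rainbow line, two of these residue vectors receive the same $\chi$-color. The corresponding two points of $L$ then receive the same pair color, so $L$ is not rainbow.

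The only real care needed is in the $c$-coordinate computation, where one must confirm that reversal within $C_t$ corresponds to reversal within each block of $C_d$; this is exactly the identity $t-1-r=(q-1)d+(d-1-r)$. I expect no other obstacles. Combining the balance check and the non-rainbow check gives $\overline{\chi}_b(C_t^n)\ge(t/d)^n\,\overline{\chi}_b(C_d^n)$.
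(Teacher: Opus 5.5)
Your proposal is correct and uses the same construction as the paper. You tile $C_t^n$ into $(t/d)^n$ translated copies of $C_d^n$, give each copy a fresh copy of an optimal balanced coloring, and note that every line of $C_t^n$ contains a full line of one sub-cube. You also supply details the paper leaves implicit: the balance check, and the explicit verification, via the first $d$ points and the identity $t-1-r=(q-1)d+(d-1-r)$, that this sub-line lies in a single block.
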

\begin{proof} 
   Since $d$ divides $t$, it is possible to partition $C_t^n$ into $(t/d)^n$ congruent $n$-cubes over $d$ elements. Color each of these $(t/d)^n$ smaller $n$-cubes with $\overline{\chi}_b(C_d^n)$ different colors. This is a total of $(t/d)^n\,\overline{\chi}_b(C_d^n)$ colors. To prove that this coloring has no rainbow lines, we argue that any geometric line of $C_t^n$ completely contains a geometric line of one of the $\left(\frac{t}{d}\right)^n$ copies of $C_d^n$. Since none of these smaller lines is rainbow (i.e., each of them has at least two points of the same color), the larger line is not rainbow.
\end{proof}

\begin{lemma}\label{prop:t=4}
    For $n\geq 2$, $ \overline{\chi}_b(C_4^n)\geq 2^n+1.$
\end{lemma}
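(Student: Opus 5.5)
The plan is to start from the coloring behind the bound $(t/2)^n$ for $t=4$, namely coloring each point by its equivalence class from Section~\ref{sec:equivalence}, and then carve out one extra color class. For $t=4$, each symmetric pair $\overline{m}$ is a genuine pair: $\{0,3\}$ or $\{1,2\}$. So there are exactly $2^n$ equivalence classes $\overline{\bf x}$, each containing $2^n$ points. As observed in Section~\ref{sec:equivalence}, a line and an equivalence class meet in exactly two points or not at all. If a line $L$ has points ${\bf x}_0,{\bf x}_1,{\bf x}_2,{\bf x}_3$ in order, then ${\bf x}_0\sim{\bf x}_3$ and ${\bf x}_1\sim{\bf x}_2$, since these are the pairs symmetric about the center of $L$. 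Hence every line meets exactly two classes, in two points each.

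Next I would count the required class sizes. We have $4^n=(2^n+1)(2^n-1)+1$, so a balanced $(2^n+1)$-coloring has $2^n$ classes of size $2^n-1$ and one class of size $2^n$. To realize this, fix one equivalence class $\overline{\bf z}$ to remain untouched. From each of the other $2^n-1$ equivalence classes, remove exactly one arbitrary point. Let $R$ be the set of the $2^n-1$ removed points, and give all of $R$ a new color. The remaining colors are the equivalence classes. One class keeps size $2^n$, the other $2^n-1$ classes have size $2^n-1$, and $|R|=2^n-1$. This gives $2^n+1$ colors and the coloring is balanced.

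It remains to check that no line is rainbow, by cases on $|L\cap R|$. If $|L\cap R|=0$, both symmetric pairs of $L$ keep their class colors, so $L$ is blocked. If $|L\cap R|=1$, the removed point lies in one of the two symmetric pairs of $L$. The other pair, say $\{{\bf x}_1,{\bf x}_2\}$ when ${\bf x}_0\in R$, is disjoint from $R$, so it still receives a single class color and blocks $L$. If $|L\cap R|\ge 2$, the new color itself appears twice on $L$. So every line is blocked, giving $\overline{\chi}_b(C_4^n)\ge 2^n+1$.

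I expect no real obstacle here. The only point needing care is the structural fact that a line in $C_4^n$ splits into exactly two symmetric pairs lying in distinct equivalence classes. This follows from the column description of lines: in columns of type (b) or (c) the entries $w$ and $3-w$ are symmetric, and fixed columns are constant along the line.
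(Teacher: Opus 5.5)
Your proof is correct, but it starts from a different base partition than the paper's.

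\textbf{The paper's construction.} It partitions $C_4^n$ into the $2^n$ corner subcubes $C_1,\dots,C_{2^n}$ (copies of $C_2^n$). It then gives a new color to the central cube $C_0=\{1,2\}^n$, which meets every $C_i$ in exactly one point. So in the paper the new color is the unique class of size $2^n$ and every old class shrinks to $2^n-1$. In your coloring it is the other way round: the new class has size $2^n-1$, and the large class is an untouched equivalence class.

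\textbf{The verifications.} The paper's check is geometric and depends on where $C_0$ sits. The first two points of a line lie in a common $C_i$. If they are not both in $R_i=C_i\setminus C_0$, then the two middle points of the line both lie in $C_0$. Your check is more structural and more robust. It uses only two facts: every line contains two disjoint pairs, each inside a single old class, and the new class meets each old class at most once. Hence every choice of removed points works. The same argument also applies to the subcube partition, since $\{\mathbf{x}_0,\mathbf{x}_1\}$ and $\{\mathbf{x}_2,\mathbf{x}_3\}$ each lie in a single corner subcube. So $C_0$ could be replaced by any transversal of the $C_i$.

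\textbf{How the two relate.} Each equivalence class meets each corner subcube in exactly one point, and the paper's $C_0$ is exactly your class $\overline{(1,\dots,1)}$. For the later use in Theorem~\ref{th:lowernew2} either coloring works. That proof only needs class sizes $2^n-1$ and $2^n$ and no rainbow lines.
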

\begin{proof} 
   We begin by partitioning the cube $C_4^n$ into $2^n$ congruent $n$-cubes over $2$ elements $C_1,C_2, \dots C_{2^n}$
    and denote by $C_0$ the centered $n$-cube over $2$ elements, that is $C_0=\{\textbf{x}=(x_1,x_2,\dots,x_n)\in C_4^n:\, x_i\in \{1,2\}\}$. Consider the sets $R_i=C_i-C_0$. 
    Assign color $0$ to all vertices in $C_0$ and color $i$ to every vertex in $R_i$, $1\leq i\leq 2^n$. Note that this is a balanced partition of $C_4^n$ into $2^n+1$ parts, $2^n$ of size $2^n-1$ and one of size $2^n$, (see Figure \ref{fig:cubes_center} for this coloring of $C_4^n$ for $n=3$). To show that there are no rainbow lines, note that the first two points of any line always belong to the same cube $C_i$, for some $1\leq i \leq 2^n$. If both points are in $R_i$, then they are the same color. Otherwise, the second and third points are in $C_0$ and thus they are the same color.   
\end{proof}
\begin{figure}[htb]
    \centering
\includegraphics{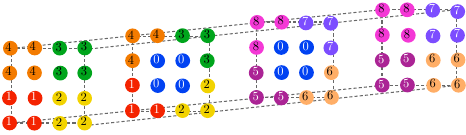}
    \caption{An illustration of the coloring in Lemma \ref{prop:t=4} for $n=3$.}
    \label{fig:cubes_center}
\end{figure}

A direct application of Lemma~\ref{thm:lowernew} and Lemma \ref{prop:t=4} gives a lower bound that improves when $t$ is a multiple of $4$. Moreover, we adapt the construction for every even $t$.

\begin{theorem}\label{th:lowernew2}
    Let $n\geq 2$. If $2\leq t\leq n$, and $t$ is even, then $\overline{\chi}_b(C_t^n)\geq \left(\frac{t}{2}\right)^n+\left\lfloor\frac{t}{4}\right\rfloor^n.$
\end{theorem}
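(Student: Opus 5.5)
The plan is to generalize the coloring of Lemma~\ref{prop:t=4} directly to every even $t$, rather than going through Lemma~\ref{thm:lowernew}. The starting point is the known construction behind Inequality~(\ref{eq:first_nt_lower}). Partition $\{0,1,\dots,t-1\}$ into the $t/2$ consecutive pairs $\{2j,2j+1\}$. This partitions $C_t^n$ into $(t/2)^n$ congruent sub-cubes over $2$ elements, which we call $2$-blocks, and each $2$-block receives its own color.

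Next, let $q=\lfloor t/4\rfloor$. For $0\le j<q$, the quadruples $\{4j,\dots,4j+3\}$ are unions of two consecutive pairs. They determine $q^n$ sub-cubes over $4$ elements, the $4$-blocks, and each $4$-block is a union of $2^n$ of the $2$-blocks. Inside each $4$-block $Q$ we copy the coloring of Lemma~\ref{prop:t=4}. Let $Z_Q$ be the centered sub-cube of $Q$, i.e.\ the points whose $i$-th coordinate lies in $\{4j_i+1,4j_i+2\}$ for all $i$. We recolor $Z_Q$ with a new color, and each of the $2^n$ $2$-blocks inside $Q$ keeps its old color on the points outside $Z_Q$. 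This gives exactly $(t/2)^n+q^n$ colors.

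Balance is then a short count. A $2$-block inside some $4$-block loses exactly one point to that block's center, so its class has size $2^n-1$. A $2$-block outside every $4$-block keeps size $2^n$, and every center class has size $2^n$. Hence all class sizes lie in $\{2^n-1,2^n\}$, and the coloring is balanced.

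The main step is showing there are no rainbow lines. Order a line $L=\langle l_1,\dots,l_n\rangle$ as $\mathbf{x}_0,\dots,\mathbf{x}_{t-1}$. In every coordinate, $\mathbf{x}_0$ and $\mathbf{x}_1$ have entries $0,1$, or $t-1,t-2$, or a common fixed value. Since $t$ is even, $\mathbf{x}_0$ and $\mathbf{x}_1$ lie in the same $2$-block $B$. So if neither point lies in a center $Z_Q$, they have the same color and $L$ is blocked.

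The point $\mathbf{x}_0$ is never in a center, because it has a moving coordinate equal to $0$ or $t-1$. The value $0$ is not central in $\{0,1,2,3\}$. The value $t-1$ is either non-central in its quadruple $\{t-4,\dots,t-1\}$ when $4\mid t$, or not covered by any quadruple when $t\equiv 2 \pmod 4$.

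So the remaining case is $\mathbf{x}_1\in Z_Q$ for some $Q$. Its moving coordinates then equal $1$, which is central in $\{0,\dots,3\}$, or $t-2$, which is central only when $4\mid t$, in $\{t-4,\dots,t-1\}$. In every such coordinate the corresponding entry of $\mathbf{x}_2$ is $2$ or $t-3$, which is central in the same quadruple, and the fixed coordinates are unchanged. Therefore $\mathbf{x}_2\in Z_Q$ as well, so $\mathbf{x}_1$ and $\mathbf{x}_2$ share the color of $Z_Q$ and $L$ is not rainbow.

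I expect this case analysis, and in particular the parity of $t$ modulo $4$ at the end $t-1$, to be the only delicate point. The hypothesis $t\le n$ should play no role in the construction; it only restricts the statement to the range where Theorem~\ref{thm:mainresult} does not already apply. The case $t=2$ is trivial, since then $q=0$ and the bound reduces to $1$.
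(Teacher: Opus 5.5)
Your proof is correct, but it takes a different route from the paper. For $4\mid t$ your coloring is exactly the paper's: a tiling of $C_t^n$ by $(t/4)^n$ copies of the coloring of Lemma~\ref{prop:t=4}. The paper certifies it through Lemma~\ref{thm:lowernew}, using that every line of $C_t^n$ contains a whole line of one of the sub-cubes, whereas you verify it directly from the first three points of each line.

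The real difference is the case $t\equiv 2 \pmod 4$. The paper deletes the \emph{middle} pair of values $\{t/2-1,t/2\}$ in every coordinate and identifies the remaining set $S$ with $C_{t-2}^n$. It places the $(t-2)$-coloring on $S$ and colors the complementary slabs by $2$-blocks. The middle position is what makes that reduction work: removing positions $t/2-1,t/2$ of a line deletes the same two values from its $b$- and $c$-coordinates, so the truncated line is a line of $C_{t-2}^n$. Deleting the last pair $\{t-2,t-1\}$, as in your layout, would not have this property (e.g.\ for $\langle b,c\rangle$).

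You never need such a correspondence. Your argument only uses three facts: $\mathbf{x}_0$ and $\mathbf{x}_1$ share a $2$-block, $\mathbf{x}_0$ is never central, and a central $\mathbf{x}_1$ forces $\mathbf{x}_2$ into the same center. So you get one uniform construction with a short, self-contained check valid for all even $t$, and you bypass the paper's somewhat loosely stated identification of $S$ with $C_{t-2}^n$.

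The paper's route, in exchange, is modular. The middle-insertion step works for whatever rainbow-free coloring (with classes of size $2^n-1$ or $2^n$) is placed on $S$. This idea of extending a coloring from a smaller cube is what the authors propose for the remaining residues of $t$ (cf.\ Figure~\ref{fig:t5_n3_26}).
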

\begin{proof}
   If $t\equiv 0 \pmod 4$), the result follows directly from Lemma~\ref{thm:lowernew} and Lemma~\ref{prop:t=4},
   namely, $\overline{\chi}_b(C_t^n)\geq \left(\frac{t}{4}\right)^n\overline{\chi}_b(C_4^n)\geq \left(\frac{t}{4}\right)^n(2^n+1)=\left(\frac{t}{2}\right)^n+\left\lfloor\frac{t}{4}\right\rfloor^n$.
   
   If $t\equiv 2 \pmod 4$, we use the construction for $t-2\equiv 0 \pmod 4$ adapted as follows (see Figure \ref{fig:t6_n3_28} for an example when $t=6$ and $n=3$). Let $S\subset C_t^n$ be the set of $(t-2)^n$ points none of whose coordinates is equal to $\frac{t}{2}-1$ or $\frac{t}{2}$. For each $\textbf{i}=(i_1,i_2,\dots,i_n)\in C_{t/2}^n$, let
   \begin{align}\label{eq:partition}
   C_2^n(\textbf{i})
   &:=2\textbf{i}+C_2^n \nonumber \\
   &=\{(x_1,x_2,\dots,x_t)\in C_t^n:x_{2j},x_{2j+1}\in\{2i_j,2i_j+1\}, 0\leq j<t/2\}.
   \end{align}
   Note that this generates a partition of $C_t^n$ into $\left(\frac{t}{2}\right)^n$ copies of $C_2^n$. More precisely, $S$ is partitioned into the $\left(\frac{t}{2}-1\right)^n$ copies of $C_2^n$ in (\ref{eq:partition}) such  that $i_j\neq \frac{t-2}{4}$ for all $0\leq j<t/2$; and $C_t^n\setminus S$ is partitioned into the $\left(\frac{t}{2}\right)^n-\left(\frac{t}{2}-1\right)^n$ copies of $C_2^n$ in (\ref{eq:partition}) such that $i_j=\frac{t-2}{4}$ for at least one $0\leq j<t/2$.

    We identify $S$ with a copy of $C_{t-2}^n$ associating each point $(x_1,x_2,\dots,x_t)\in S$ with the point $(x'_1,x'_2,\dots,x'_t)\in C_{t-2}^n$ such that $x'_j=x_j$ if $j<t/2$ and  $x'_j=x_j-2$ if $j\geq t/2$. Under this correspondence, as long as $l_j\neq \frac{t}{2}-1$ or $\frac{t}{2}$, the line $L=\langle l_1,l_2,\dots,l_n \rangle \in \mathcal{L}(C_t^n)$ can be identified with the line $\langle l'_1,l'_2,\dots,l'_n \rangle\in \mathcal{L}(C_{t-2}^n)$, where $l'_j=l_j-2$ if $l_j>\frac{t}{2}$  and $l'_j=l_j$ otherwise.  Note that if $l_j= \frac{t}{2}-1$ or $\frac{t}{2}$ for some $0\leq j<t$, then $L$ is contained in $C_t^n\setminus S$. In fact, if $\textbf{x}_0,\textbf{x}_1,\dots,\textbf{x}_{t-1}$ are the points of $L$ ordered as in Section~\ref{sec:n-cube_definition}, then $\textbf{x}_{2k}$ and $\textbf{x}_{2k+1}$ are in $C_2^n(i_1,i_2,\dots,i_n)$, where $i_j=k$ if $l_j=b$,  $i_j=\frac{t}{2}-1-k$ if $l_j=c$, and $i_j=\left\lfloor\frac{l_j}{2}\right\rfloor$ otherwise.
\begin{figure}[ht!]
    \centering
    \includegraphics[width=1\linewidth]{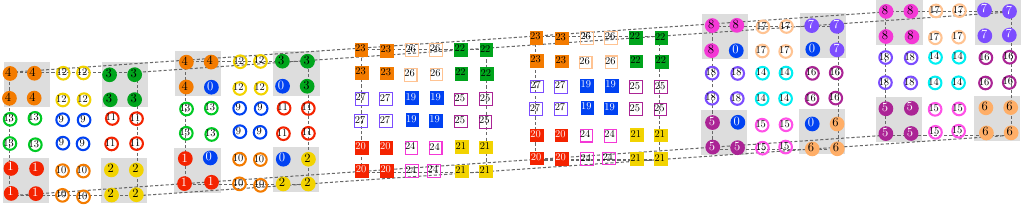}
    \caption{An illustration of the coloring in Theorem~\ref{th:lowernew2} for $t=6$ and $n=3$. The shaded region is a copy of the construction in Figure \ref{fig:cubes_center}, nine colors are used (0-8). The nineteen remaning color-classes are copies of $C_2^3$.}
    \label{fig:t6_n3_28}
\end{figure}
    
    Use the coloring provided by Lemma ~\ref{thm:lowernew} and Lemma~\ref{prop:t=4} for the set $S$ (see the shaded region in Figure \ref{fig:t6_n3_28}).  
      This partial coloring of $C_t^n$ uses $\left(\frac{t-2}{2}\right)^n+\left(\frac{t-2}{4}\right)^n$ colors, each $2^n-1$ or $2^n$ times, and guarantees that all the lines $\langle l_1,l_2,\dots,l_n \rangle$ in $C_t^n$ with $l_j\neq \frac{t}{2}-1$ or $\frac{t}{2}$ for all $0\leq j<t$  are not rainbow.
     Only the lines contained in $C_t^n\setminus S$ are not yet blocked. Since each of these lines contains at least two points in the same copy of $C_2^n$ in the partition of $C_t^n\setminus S$, coloring the $2^n$ points of each of these $\left(\frac{t}{2}\right)^n-\left(\frac{t}{2}-1\right)^n$ copies of $C_2^n$ with a different color ensures that the remaining lines are not rainbow. This completes a balanced rainbow-free coloring of $C_t^n$ that uses a total of 
  $\left(\frac{t-2}{2}\right)^n
  +\left(\frac{t-2}{4}\right)^n
  +\left(\frac{t}{2}\right)^n
  -\left(\frac{t}{2}-1\right)
  =\left(\frac{t}{2}\right)^n+\left\lfloor\frac{t}{4}\right\rfloor^n$ colors, each appearing $2^n-1$ or $2^n$ times. 
\end{proof}

\begin{figure}
    \centering
    \includegraphics[width=0.8\linewidth]{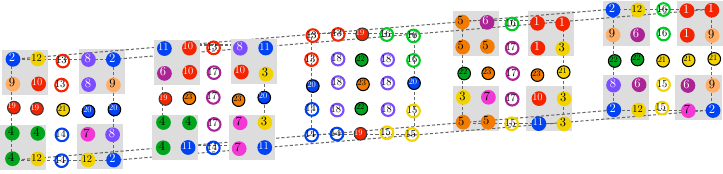}
    \caption{A possible approach to extend Theorem~\ref{thm:mainresult} to odd $t$. This particular example extends the rainbow-free coloring of $C_4^3$ in Figure \ref{fig:small_cases2}c (shaded region) to one of $C_5^3$ with 23 colors.}
    \label{fig:t5_n3_26}
\end{figure}

Note that this lower bound is not significant for $t\geq 4n-2$ due to Theorem~\ref{thm:mainresult}. However, it provides the best known bound for the remaining even values of  $t$. The other classes of $t \pmod 4$ need a dimension-specific analysis to extend the coloring of $C_{t-1}^n$ to $C_t^n$. We illustrate this approach in Figure \ref{fig:t5_n3_26} for  $t=5$ and $n=3$.

\section{Conclusions and future work}\label{sec:conclusions}
So far, we have settled the exact value of the balanced upper chromatic number of $C_t^n$, for any positive integers $n$ and $t \geq 4n-2$ (Theorem~\ref{thm:mainresult}). That is, we determined the largest integer $k$ for which there is a balanced $k$-coloring of $C^n_t$ without rainbow geometric lines. Namely,
\[
\overline{\chi}_b(C_t^n) =\frac{3t^n-(t+2)^n}{2}, \text{ for $t\geq 4n-2$}.
\]

This value still serves as an upper bound for $\overline{\chi}_b(C_t^n)$ for $2/(\sqrt[n]{2}-1)\leq t <4n-2$, but the construction in Section \ref{sec:mainresult} does not work in this range. In this case, Corollary \ref{cor:upper} and Theorem \ref{th:lowernew2} give the following bounds (noticing that the lower bound only works for even $t$),
\[\left(\frac{t}{2}\right)^n+\left\lfloor\frac{t}{4}\right\rfloor^n\leq\overline{\chi}_b(C_t^n)\leq \frac{3t^n-(t+2)^n}{2}, \text{ for } 2/(\sqrt[n]{2}-1)\leq t <4n-2.\]
We conjecture that it is still possible to find a double matching in this range (a balanced coloring would still consist of colors appearing once or twice) and so the upper bound would still be the correct value of $\overline{\chi}_b(C_t^n)$. This would extend Theorem~\ref{thm:mainresult} as follows.

\begin{figure}[hbt]
    \centering
    \includegraphics[width=1\linewidth]{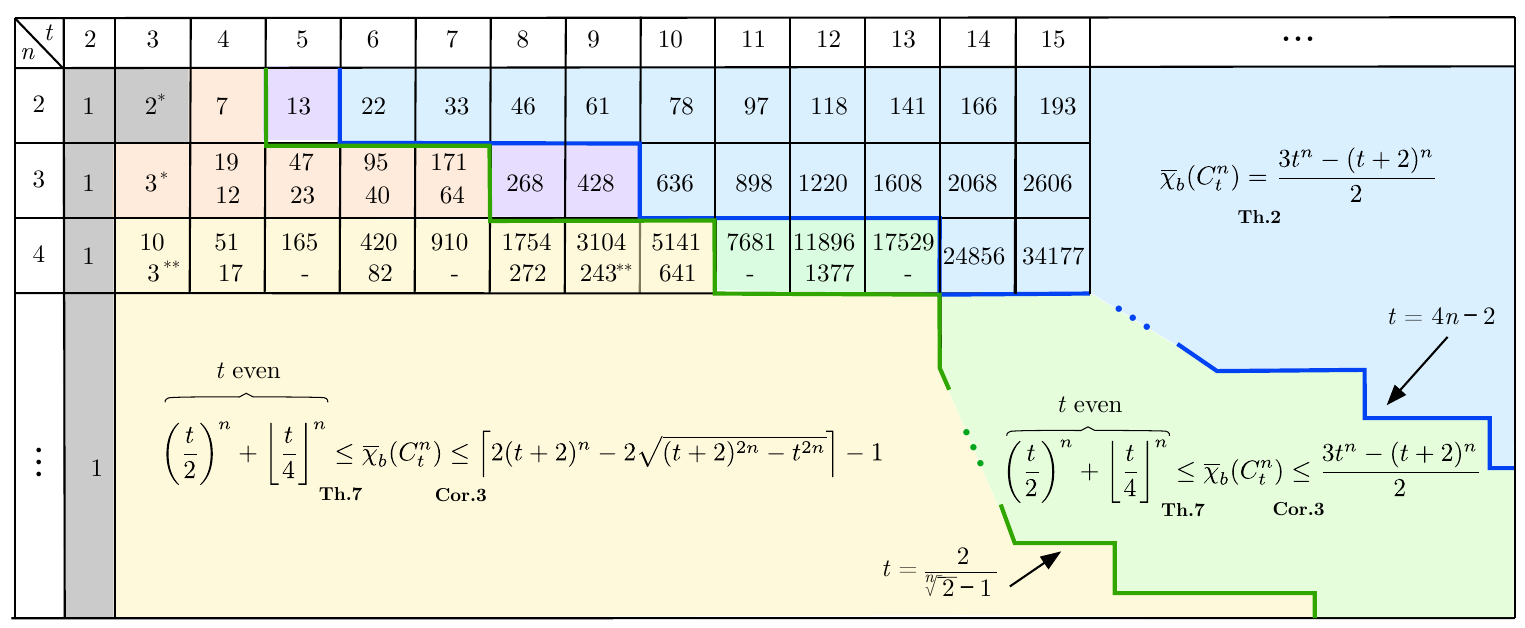}
    \caption{Summary of results: The exact value of $\overline{\chi}_b(C_t^n)$ is known for pairs $(n,t)$ in the grey zone (left, \cite{montejano}) and in the blue zone (upper right, Theorem~\ref{thm:mainresult}); we conjecture that Theorem~\ref{thm:mainresult} can be extended to the green zone (lower right, Conjecture \ref{conj}); we prove this conjecture for $n=2,3$ (purple region); except for a few more improvements, the remaining pairs $(n,t)$ are bounded by Theorem \ref{th:lowernew2} and the second range in Corollary \ref{cor:upper} as shown in the yellow zone (bottom). (*) This is the only case where a smaller upper bound than the one in Corollary \ref{cor:upper} has been found. (**) A different approach than that in Theorem \ref{th:lowernew2} was used for these lower bounds (see comments after Problem 4). The remaining lower bounds for $n=2,3$ are covered by special constructions (orange region).}
    \label{fig:summary}
\end{figure}

\begin{conj}\label{conj}
    For integers $n\geq 2$ and $t\geq \frac{2}{\sqrt[n]{2}-1}$, we have
$$\overline{\chi}_b(C_t^n)=\frac{3t^n-(t+2)^n}{2}.$$
\end{conj}

For the remaining values $2\leq t< 2/(\sqrt[n]{2}-1) $, the second part of Corollary~\ref{cor:upper} and Theorem \ref{th:lowernew2} give the bounds (again we only proved the lower bound for even $t$),

\begin{multline*}
    \left(\frac{t}{2}\right)^n+\left\lfloor\frac{t}{4}\right\rfloor^n
    \leq\overline{\chi}_b(C_t^n)\leq\left\lceil 2(t+2)^n-2\sqrt{(t+2)^{2n}-t^{2n}}\right\rceil-1, \\ \text{ for } 2\leq t< 2/(\sqrt[n]{2}-1).
\end{multline*}

This inequality chain is only tight for $t=2$, giving $\overline{\chi}_b(C_2^n)=1$ for any $n\geq 2$ (as observed earlier). Figure~\ref{fig:summary} summarizes these results. It also includes several improvements to complete the 2-dimensional case and close the gap in dimension 3.

\begin{figure}[ht!]
\centering
\begin{subfigure}[b]{0.25\linewidth}
\includegraphics[width=\linewidth]{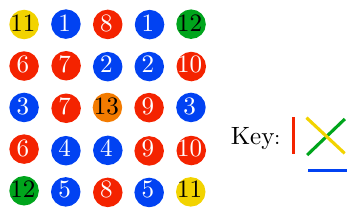}
\caption{A balanced $13$-coloring $C_{5}^2$ with no rainbow lines. The labels $\{1,2,\dots,13\}$ to indicate the colors, and the corresponding double matching is indicated in red, blue, green, and yellow as shown by the key.  }
\label{fig:dim3symm_5}
\end{subfigure}
\hspace{.3in}
\begin{subfigure}[b]{0.6\linewidth}
\includegraphics[width=\linewidth]{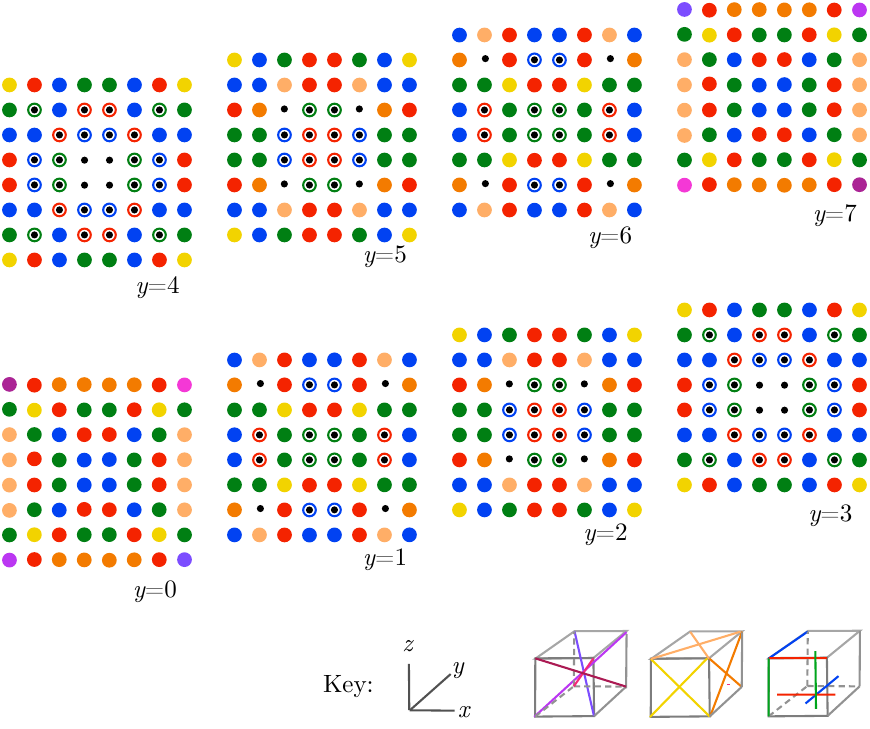}
\caption{A double-matching for $C_{8}^3$. The hollow points are a modification of the general construction for a double-matching of $C_{t}^3$ when $t\geq 10$. In this matching, only $t^3-2|\mathcal{L}(C_8^3)|=t^3-2(3t^2+6t+4)=512-488=24$ points remain unassigned to lines. They are shown by the uncolored vertices.}
\label{dim3symm_8v}
\end{subfigure}
\newline

\begin{subfigure}[c]{0.9\linewidth}
\includegraphics[width=\linewidth]{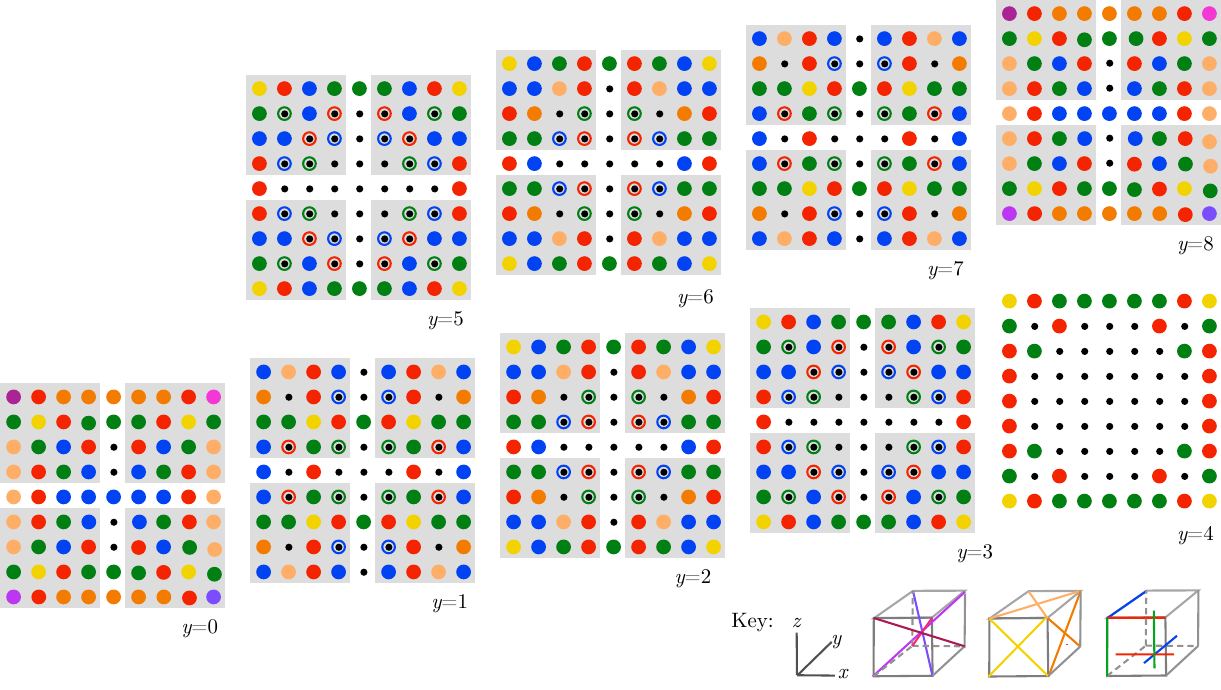}
\caption{A double-matching for $C_{9}^3$; it builts from the matching in part (b) shown by the shaded region.}
\label{dim3symm_9v}
\end{subfigure}
\caption{Double matchings for $C_{5}^2$, $C_{8}^3$, and $C_{9}^3$.}
\label{fig:dim3symm_8}
\end{figure}

Concretely, we prove Conjecture~\ref{conj} for $n=2$ and $n=3$
by providing constructions achieving the upper bound for $n=2$ and $t=5$ in Figure~\ref{fig:dim3symm_5}; for $n=3$ and $t=8$ in Figure~\ref{dim3symm_8v}; and for $n=3$ and $t=9$ in Figure~\ref{dim3symm_9v}. All these constructions are slight modifications of our construction for $t\geq 4n-2$, each is associated to a double-matching as in Section \ref{sec:mainresult}. To complete the 2-dimensional case, the construction in Figure \ref{fig:small_cases2}(a) matches the upper bound in Corollary~\ref{cor:upper}. The refinements of the lower bound for the remaining cases in the space are shown in Figure~\ref{fig:t5_n3_26} for $t=5$, and in  Figures~\ref{fig:small_cases2}(b-e) for  $t=3,4,6,7$, respectively. Besides the case $n=2$ and $t=3$, for which it is known that $\overline{\chi}_b(C_3^2)=2$ \cite{montejano}, there is only one case
for which we have improved the upper bound in Corollary~\ref{cor:upper}, namely, for $n=3$ and $t=3$, where we used a computer program to check that any $4$-coloring of $C_3^3$ contains rainbow lines.

\begin{figure}[htb]
    \centering
    \includegraphics[width=.93\textwidth]{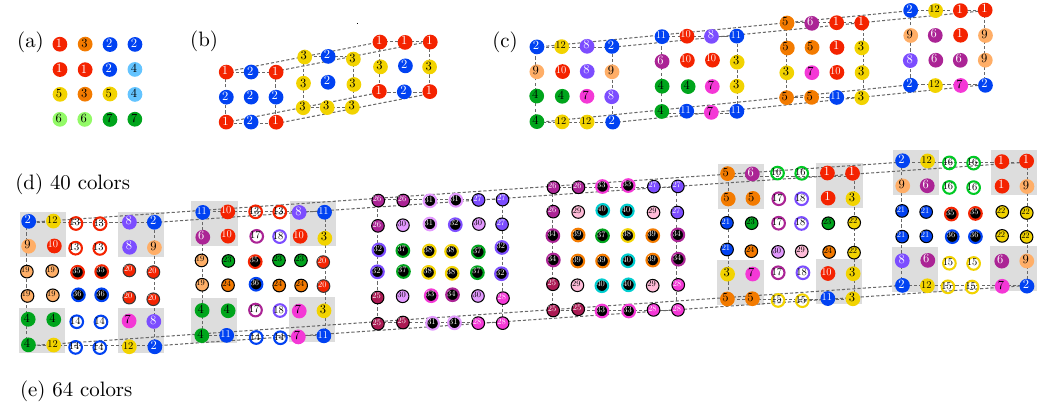}\vspace{-.3in}
    \includegraphics[width=.93\textwidth]{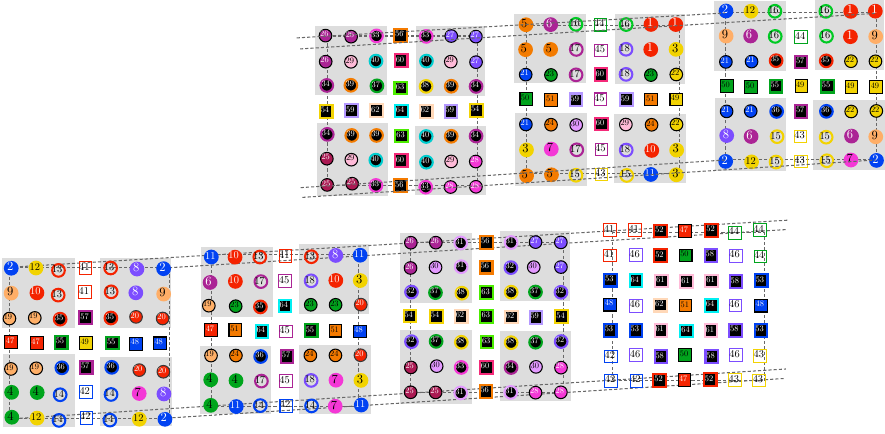}
    \caption{Rainbow-free colorings of (a) $C_4^2$, (b) $C_3^3$, (c) $C_4^3$, (d) $C_6^3$, and (e) $C_7^3$. The shaded regions highlight a coloring of a smaller size (e.g. the shaded region in (d) corresponds to the coloring in (c)). The colorings in (d)-(e) expand the one in (c), so (c)-(e) have color classes of sizes 5 and 6.}
    \label{fig:small_cases2}
\end{figure}

We finish this section by proposing the following open problems. First, it would be interesting to see how the four undetermined values in the space behave in relation to the bound in Corollary~\ref{cor:upper}.

\begin{problem}
    Determine the exact values of $\overline{\chi}_b(C_t^3)$ for $t=4,5,6,7$.
\end{problem}

We note that the lower bound of Theorem \ref{th:lowernew2}, only applies to even values of $t$. A trivial lower bound of $t-1$ applies for odd $t$, but we believe that a modification of the bound  in Theorem \ref{th:lowernew2} can be used for odd $t$, as we did in Figure \ref{fig:t5_n3_26} for $n=3$ and $t=5$; and in Figure \ref{fig:small_cases2}(e) for $n=3$ and $t=7$. 

\begin{problem}
    Provide a nontrivial lower bound for $\overline{\chi}_b(C_t^n)$ when $t$ is odd, perhaps using the construction of Theorem \ref{th:lowernew2}. 
\end{problem}

Lemma \ref{thm:lowernew} gives a lower bound for $\overline{\chi}_b(C_t^n)$ in terms of $\overline{\chi}_b(C_t^d)$, where $d$ is a divisor of $n$. This is in large part what allowed us to prove Theorem \ref{th:lowernew2}. However, in order to use this recursive inequality when $t=p^m$ for some prime $p$, we would need a good lower bound for  $\overline{\chi}_b(C_p^n)$.

\begin{problem}
    Provide a nontrivial lower bound for $\overline{\chi}_b(C_p^n)$ when $p$ is an odd prime. 
\end{problem}

We finish with the following  variation of our parameter that requires the use of all colors exactly the same number of times.
\begin{problem}
    Detemine the maximum number of colors $\overline{\chi}^*_b(C_t^n)$ that a coloring of the points of the  $n$-cubes $C_t^n$ can have without rainbow lines if each color must be used exactly the same number of times. 
\end{problem}
As mentioned in the introduction, these colorings are typically called \emph{equinumerous colorings} \cite{jlmnr03}. Note that $1\leq\overline{\chi}^*_b(C_t^n)\leq\overline{\chi}_b(C_t^n)$ and if $\overline{\chi}_b(C_t^n)$ divides $t^n$, then $\overline{\chi}^*_b(C_t^n)=\overline{\chi}_b(C_t^n)$. This is the case for $n=t=3$, where $\overline{\chi}_b(C_3^3)=3$ (in a balanced 3-coloring of $C_3^3$, each color is used 9 times). This is useful for building balanced rainbow-free colorings in higher dimensions. For example, $C_3^4$ can be partitioned into 3 copies of $C_3^3$ by fixing the last coordinate. Using a copy of the strictly balanced coloring of $C_3^3$ in Figure \ref{fig:small_cases2}(b) for each part, generates a strictly balanced rainbow-free 3-coloring of $C_3^4$. Thus $\overline{\chi}_b(C_3^4)\geq \overline{\chi}^*_b(C_3^4)\geq 3$. Moreover, by Lemma  \ref{thm:lowernew}, $\overline{\chi}_b(C_9^4)\geq (9/3)^4\cdot\overline{\chi}_b(C_3^4)\geq 3^5=243$. In general, $\overline{\chi}_b(C_t^{n+1})\geq \overline{\chi}^*_b(C_t^n)$ and, together with Lemma \ref{thm:lowernew}, 
\[
\overline{\chi}_b(C_{t}^{n+1})\geq \left(\frac{t}{d}\right)^n\overline{\chi}_b(C_d^{n+1})\geq \left(\frac{t}{d}\right)^n\overline{\chi}^*_b(C_d^n),
\]
whenever $d$ divides $t$.

\bibliographystyle{abbrv}
\bibliography{biblio}
\end{document}